\newtheorem{theorem}{Theorem}[section]
\newtheorem{lemma}{Lemma}[section]
\newtheorem{remark}{Remark}[section]
\numberwithin{equation}{section}
\def\XXint#1#2#3{{\setbox0=\hbox{$#1{#2#3}{\int}$}
\vcenter{\hbox{$#2#3$}}\kern-.51\wd0}}
\newcommand{\reff}[1]{{\rm (\ref{#1})}}
\newcommand{\R}{\mathbb{R}}            
\newcommand{\bn}{\mathbf n}            
\newcommand{\bnu}{{\bm \nu}}            
\newcommand{\ve}{\varepsilon}          
\newcommand{\bpsi}{\bm \psi}    
\newcommand{\bita}{\bm \eta} 
\newcommand{\bxi}{\bm \xi} 
\newcommand{\bu}{\bm u} 
 \newcommand{\bchi}{\bm \chi} 
\newcommand{\bv}{\bm v} 
\newcommand{\bd}{\bm d}
\newcommand{\bfb}{\mathbf b}
\newcommand{\bdelta}{\bm \delta}
\begin{document}


\title{Fast Iterative Method for Local Steric Poisson--Boltzmann Theories in Biomolecular Solvation}
\author{Wei Dou\thanks{
Department of Mathematics and Mathematical Center for Interdiscipline Research, Soochow University,  Suzhou, Jiangsu, China. Email: wdouwdou@stu.suda.edu.cn.}
\and
Minhong Chen\thanks{Department of Mathematics, Zhejiang Sci-Tech University, Hangzhou, Zhejiang, China. Corresponding Author, Email: mhchen@zstu.edu.cn.}
\and
Shenggao Zhou\thanks{School of Mathematical Sciences, MOE-LSC, and CMAI-Shanghai, Shanghai Jiao Tong University, Shanghai, China. Corresponding Author, Email: sgzhou@sjtu.edu.cn.}
}

\date{\today}

\maketitle

\begin{abstract}
This work proposes a fast iterative method for local steric Poisson--Boltzmann (PB) theories, in which the electrostatic potential is governed by the Poisson's equation and ionic concentrations satisfy equilibrium conditions. To present the method, we focus on a  local steric PB theory derived from a lattice-gas model, as an example. The advantages of the proposed method in efficiency are achieved by treating ionic concentrations as scalar implicit functions of the electrostatic potential, though such functions are only numerically achievable. 
The existence, uniqueness, boundness, and smoothness of such functions are rigorously established. A Newton iteration method with truncation is proposed to solve a nonlinear system discretized from the generalized PB equations. The existence and uniqueness of the solution to the discretized nonlinear system are established by showing that it is a unique minimizer of a constructed convex energy. Thanks to the boundness of ionic concentrations, truncation bounds for the potential are obtained by using the extremum principle. The truncation step in iterations is shown to be energy and error decreasing. To further speed-up computations, we propose a novel precomputing-interpolation strategy, which is applicable to other local steric PB theories and makes the proposed methods for solving steric PB theories as efficient as for solving the classical PB theory. Analysis on the Newton iteration method with truncation shows local quadratic convergence for the proposed numerical methods. Applications to realistic biomolecular solvation systems reveal that counterions with steric hindrance stratify in an order prescribed by the parameter of ionic valence-to-volume ratio. Finally, we remark that the proposed iterative methods for local steric PB theories can be readily incorporated in well-known classical PB solvers.
\end{abstract}

\noindent
{\bf Keywords}:
Steric Poisson--Boltzmann Theory; Newton Iteration Method with Truncation; Precomputing Speed-up Strategy; High Efficiency; Counterion Stratification.

{\allowdisplaybreaks


\section{Introduction}
\label{s:Introduction}
Electrostatic interactions between biomolecules, water molecules, and mobile ions are fundamental to the function and stability of solvated biomolecules, such as membranes and proteins~\cite{McCammon_PNAS09}. Ionic steric effects in electrostatic interactions have profound impact on the  dynamics of an underlying biomolecular system. For example, ionic steric effect plays a key role in the selection of ions that permeate through transmembrane channels~\cite{Hille_Book2001}.  Due to the long-range nature of the Coulomb force, electrostatic interactions are difficult to deal with, not only in continuum level but also in molecular simulations. As one of the most popular theories, the classical Poisson--Boltzmann (PB) theory has achieved great success in describing electrostatic interactions in biomolecular systems, electrochemical systems, etc.~\cite{DavisMcCammon_ChemRev90, Baker:COSB:2005}. Based on the mean-field approximation, the classical PB theory ignores direct ion-ion interaction details and assumes that ions distribute according to the mean-field electrostatic potential. With such approximations, the PB equation, which couples the Poisson's equation for the electrostatic potential and Boltzmann distributions for ionic concentrations, becomes a mathematically simple model that can be efficiently solved by various numerical methods~\cite{LuETC_CiCP08, LLP+:MBMB:2013}.  

The PB theory has been very successful in many applications. Nonetheless, the mean-field theory is known to neglect ionic steric effects and ion-ion correlations~\cite{Shklovskii_Rev02}. Without steric hindrance, the PB theory could predict unphysically high counterion concentration next to a charged surface~\cite{BAO_PRL97, ZhouWangLi_PRE11}. Recent years have seen a growing interest in incorporating the ionic steric effects within the framework of the PB theory~\cite{BAO_PRL97, BoschitschDanilov_SizeEffect_JCC12, Li_SIMA09,Li_Nonlinearity09,LuZhou_BiophysJ11,ZhouWangLi_PRE11}. The steric effect can be effectively incorporated in a variational approach by adding an excess chemical potential energy to the classical PB free energy~\cite{BazantReview_ACIS09}. One of the most popular local steric PB models is based on the statistical mechanics of ions and solvent molecules on lattices. In such a model, the excess chemical potential is described by the entropy of solvent molecules~\cite{Bikerman:PM:1942, BAO_PRL97, Li_Nonlinearity09, LuZhou_BiophysJ11, Horng_Entropy20}. The excess chemical potential can also represent hard-sphere interactions between ions using the Lennard-Jones (LJ) potential energy~\cite{HyonLiuBob_CMS10}. This leads to a nonlocal model, which can be further approximated using the Fourier analysis to obtain a computationally more tractable local model~\cite{HorngLinLiuBob_JPCB12, LinBob_CMS14, DingWangZhou_JCP19}.  Alternatively, the excess chemical potential can be determined by the Carnahan--Starling (CS) equation of state for hard-sphere liquids of a uniform size or the Boublik--Mansoori--Carnahan--Starling--Leland (BMCSL) equation of state for unequal sizes~\cite{BazantReview_ACIS09}. The excess chemical potential determined by the equation of state of CS or BMCSL can also derive local steric PB theories.  

Extensive efforts have been devoted to the development of numerical methods for the classical PB equation, ranging from finite difference methods~\cite{DavisMcCammon_JCC91, BruccoleriSharp_JCC97, IBR:CPC:1998, ZhouFeigWei_JCC2008, JWangRLuo_JCTC12}, finite element methods~\cite{BHW:JCC:2000, HBW:JCC:2000, ChenHolstXu_SINUM2000} to boundary element methods~\cite{LuChengHuangMcCammon_PNAS06,  GK:JCP:2013, GengJacob_CPC2013, LuChengHuangMcCammon_CPC13}. Such advances have led to the development of several successful software packages, such as APBS~\cite{BSJ+:PNASUSA:2001, APBS_Baker18}, DelPhi~\cite{DelPhiAlexov_CICP2013, LiJiaPengAlexov_BioinfM2017},  MIBPB~\cite{ZhouFeigWei_JCC2008, ChenGengWei_JCC2011}, and AFMPB~\cite{LuChengHuangMcCammon_CPC13}. Nonetheless, not many numerical methods have been developed for steric PB models, especially the models with nonuniform ionic sizes. One of the main obstacles to the development of an efficient solver for steric PB models with nonuniform ionic sizes is that the Boltzmann distributions in the classical PB theory are no longer available. An augmented Lagrange multiplier method has been proposed to minimize a free-energy functional subject to constraints for a steric PB model in the work~\cite{ZhouWangLi_PRE11}. Newton iterative relaxation finite element methods have been developed to solve steric PB models with nonuniform ionic sizes, which, after discretization, become a large nonlinear system coupling unknowns of both ionic concentrations and the electrostatic potential on computational grid points~\cite{Xie_IJNAM2017, LiXie_IJNAM2015, XieYingXie_JCC2017}.  The unknowns of each ionic concentrations on grid points were updated in each relaxation step in a coupled way~\cite{Xie_IJNAM2017}. Based on an observation made in the work~\cite{JiZhou_CMS19} that the unknowns of concentrations are spatially decoupled for each grid point with a given electrostatic potential, efficient and memory-saving iteration methods could be developed to solve steric PB models with nonuniform ionic sizes. 


In this work, we first briefly review several versions of local steric PB theories. We then focus on a widely used steric PB model based on the lattice-gas theory, and introduce the corresponding numerical discretization method for the Poisson's equation and equilibrium conditions for ionic concentrations. Since ionic concentrations are spatially decoupled for each grid point with a given electrostatic potential~\cite{JiZhou_CMS19}, we treat ionic concentrations as implicit scalar functions of the electrostatic potential, i.e., generalized Boltzmann distributions. The existence, uniqueness, boundness, and smoothness of such distributions are rigorously established for a given electrostatic potential. We develop a Newton iterative method with truncation for the nonlinear system resulting from discretization of the generalized PB equation, which couples the Poisson's equation with the generalized Boltzmann distributions. The existence and uniqueness of the solution to the discretized nonlinear system are proved by showing that the solution is a unique minimizer of a constructed convex energy functional. By the boundness of ionic concentrations,  we are able to establish upper and lower estimates on the electrostatic potential using the extremum principle. Such estimates are employed to truncate the solution in  Newton iterations. The truncation step is proved to be error and energy decreasing. Further analysis on the iteration method establishes that the method has local quadratic convergence rate. Numerical simulations are performed to demonstrate the efficiency and robustness of the proposed numerical methods. Applications to biomolecular solvation systems reveal that ions stratify next to the surface of biomolecules when the electrostatic potential is strong. Also, it is confirmed that the order of stratification is prescribed by the parameter of ionic valence-to-volume ratio. 

We remark that the proposed numerical method has salient features in efficiency and memory-saving. Several reasons account for such features: First, the iterations only involve the unknowns of electrostatic potential, rather than both potential and (possible multiple-species) ionic concentrations; Second, the derivative of concentrations with respect to the potential is used in iterations, making the super-linear convergence of the whole algorithm possible; Third, to further speed-up computations, we propose a precomputing-interpolation strategy, in which the generalized Boltzmann distributions and their derivatives are interpolated from precomputed and stored data. Such a strategy is directly applicable to other local steric PB theories, and makes the proposed algorithm for solving steric PB theories as efficient as for solving the classical PB theory.  Finally, we remark that our efficient iterative methods for the local steric PB theory can be readily incorporated to well-established PB solvers, such as the APBS~\cite{BSJ+:PNASUSA:2001, APBS_Baker18}, DelPhi~\cite{DelPhiAlexov_CICP2013, LiJiaPengAlexov_BioinfM2017},  MIBPB~\cite{ZhouFeigWei_JCC2008, ChenGengWei_JCC2011}, and AFMPB~\cite{LuChengHuangMcCammon_CPC13}.


The rest of this paper is organized as follows: In Section~\ref{s:Theory}, we introduce local steric PB theories. In Section~\ref{s:NumMethods}, we propose numerical methods, and present some analysis on the model and numerical methods. In Section~\ref{s:NumRes},  we report some numerical simulations to demonstrate the effectiveness of the numerical methods.  Finally, we draw conclusions in Section~\ref{s:Conclusions}.

\section{Local Steric Poisson--Boltzmann Theories}
\label{s:Theory}
\begin{figure}
\begin{center}
\includegraphics[scale=0.6]{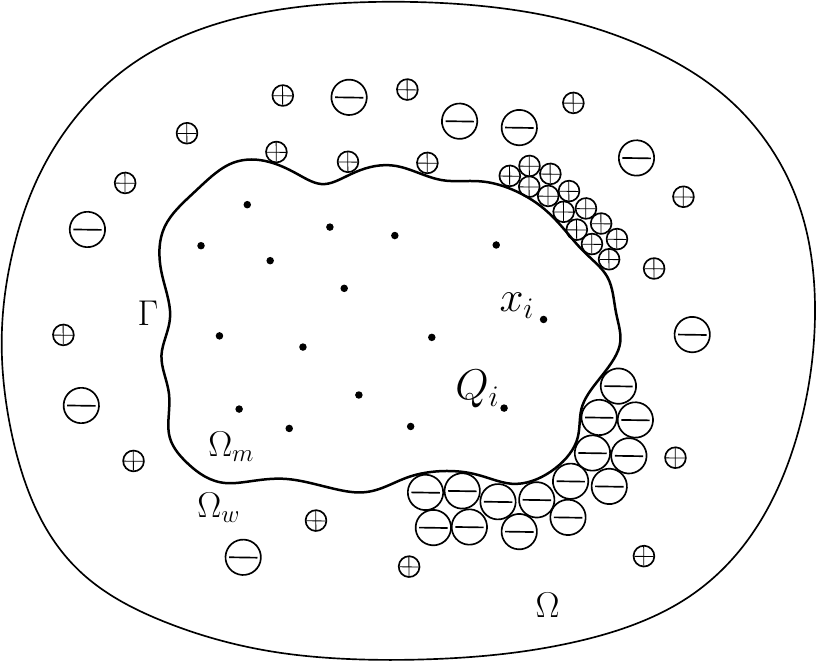}
\caption{A schematic view of a charged biomolecule solvated in an ionic solution with $M $ $(M \ge 1)$ ionic species. The system occupies a region $\Omega$, which is divided into a solute region $\Omega_{\rm m}$ and a solvent region $\Omega_{\rm w}$ by a solute-solvent interface $\Gamma$. The solute contains atoms, located at $x_i$, carry partial charges $Q_i$ ($1\leq i \leq N$).  The ions are shown individually to highlight the steric effects of ions in electric double layers next to charged biomolecules. 
}
\label{f:geometry}
\end{center}
\end{figure}
Consider the solvation of charged biomolecules in an ionic solution, in which the aqueous solvent is treated implicitly as a continuum and the distribution of ions is described by ionic concentrations. As shown in the schematic view of Fig.~\ref{f:geometry}, the solvation system occupies a region $\Omega$, which is an open and connected subset of $\R^3$ with a smooth boundary $\partial \Omega$. A solute-solvent interface $\Gamma$ separates the solute region, $\Omega_{\rm m}$, from the solvent region, $\Omega_{\rm w}$, and $\bn$ denotes a unit normal vector defined on $\Gamma$. Assume that the solute consists of $N$ atoms located at $x_i$, carrying fixed partial charges $Q_i$ ($1\leq i \leq N$). Suppose that there are $M $ $(M \ge 1)$ ionic species in the ionic solution. Deonte by $z_i$ and $c_i = c_i(x)$ the valence and local ionic concentration of the $i$th ($1 \le i \le M$) ionic species at position $x \in \Omega$, respectively. Let $c = (c_1, \dots, c_M).$ The volume associated to an ion of the $i$th species is denoted by $v_i$ $(1 \le i \le M)$.


The electrostatic potential $\psi$ of the charged system is governed by a boundary-value problem (BVP) of the Poisson's equation
\begin{equation}\label{BVP}
\left\{
\begin{aligned}
& -\nabla \cdot \ve_0 \ve_r \nabla \psi =  \rho^f \qquad & & \mbox{in } \Omega_{\rm m}, \\
& -\nabla \cdot \ve_0 \ve_r \nabla \psi =  \sum_{i=1}^M z_i e c_i \qquad & & \mbox{in } \Omega_{\rm w}, \\
& [[\psi]] =0~ \mbox{and } [[\ve_r \frac{\partial \psi}{\partial \bn}]]= 0 & & \mbox{on } \Gamma,\\
& \psi = \psi^\infty  \qquad & & \mbox{on } \partial \Omega,
\end{aligned}
\right.
\end{equation}
where $\ve_0$ is the vacuum permittivity,  the dielectric coefficient $\ve_r = \ve_r (x)$ is $\ve_{\rm w}$ if $x \in \Omega_{\rm w}$ and is $\ve_{\rm m}$ if $x \in \Omega_{\rm m}$,  $\rho^f=\sum_{i=1}^N Q_i \delta_{x_i}$ is the fixed charge distribution representing point charges carried by solute atoms, $e$ is the elementary charge, $[[\zeta]]=\zeta|_{\Omega_{\rm w}}-\zeta|_{\Omega_{\rm m}}$ denotes the jump across $\Gamma$ of a function $\zeta$ from $\Omega_{\rm m}$ to $\Omega_{\rm w}$, 
and $\psi^\infty : \partial \Omega \to \R$ is a given bounded and smooth function representing the electrostatic potential on the boundary. Define the total charge density $\rho$ by
$$\rho=\chi^w\sum_{i=1}^M z_i e c_i + \rho^f,$$
where $\chi^w$ is a characteristic function of the solvent region $\Omega_{\rm w}$.

To incorporate ionic steric effects in the framework of the Poisson--Boltzmann (PB) theory, we consider the electrostatic solvation free-energy functional of ionic concentrations:
\begin{equation}\label{Functional}
F(c) = F_{pot}(c)+ F_{ent}(c) + F_{ex}(c).
\end{equation}
Here the electrostatic potential solvation energy is given by
\[
F_{pot}(c) = \int_{\Omega_{\rm w}} \frac12 \rho \psi \, dV +\frac{1}{2}\sum_{i=1}^N Q_i\psi^r(x_i) -\frac{1}{2}\int_{\partial\Omega}\varepsilon_0\varepsilon_r\frac{\partial\psi}{\partial\bnu}\psi^{\infty}dS,
\]
where $\psi$ is determined by the BVP~\reff{BVP},  $\bnu$ denotes the unit exterior normal on $\partial \Omega$, and $\psi^r:= \psi-\psi^{f}$ is the reaction potential, with $\psi^{f}$ being the electrostatic potential of the same charged biomolecules before solvation:
\begin{equation}\label{psif}
	\psi^{f}(x)=\sum_{i=1}^N \frac{Q_i}{ 4\pi \ve_0\ve_{ m} |x- x_i|}.
\end{equation}
The first term in $F_{pot}(c)$ represents the electrostatic potential energy due to mobile ions, the second term describes the reaction field potential energy, and the last boundary term accounts for the electrostatic potential energy contributed from the exterior region. Further details can be found in the works~\cite{VISMPB_JCTC2014, LiuQiaoLu_SIAP18}.
The entropy of ions is given by
\[
F_{ent}(c) = \beta^{-1} \sum_{i=1}^M \int_{\Omega_{\rm w}} c_i \left[ \ln (v_i c_i) - 1 \right]\,dV,
\]
where $\beta = (k_B T)^{-1}$ with $k_B$ the Boltzmann constant and $T$ the temperature. The excess potential energy $F_{ex}(c)$ accounts for ionic steric effects. First variation of $F(c)$ with respect to $c_i$ leads to chemical potentials
\[
\mu_i:=\frac{\delta F}{\delta c_i} = z_i e \psi + \beta^{-1} \ln(v_i c_i) + \mu_i^{ex}(c), \quad i=1, \cdots, M.
\]
Here, the sum of first two terms is the chemical potential of ideal gas, and the excess chemical potential $\mu_i^{ex}:=\frac{\delta  F_{ex}}{\delta c_i}$ accounts for the ionic steric effects. See the references~\cite{Li_SIMA09, Li_Nonlinearity09, LiuQiaoLu_SIAP18} for formal calculations on the first variation.
In the equilibrium, the  total chemical potential can be further determined by the bulk ionic concentrations, $c^{\infty}=[c_1^\infty, \cdots, c_M^\infty]$, which are achieved when the electrostatic potential vanishes. Thus, we have equilibrium conditions for ionic concentrations:
\begin{equation}\label{EqCond}
z_i e \psi + \beta^{-1} \ln(\frac{c_i}{c_i^\infty}) + \mu_i^{ex}(c)- \mu_i^{ex}(c^{\infty})=0, \quad i=1, \cdots, M.
\end{equation}
Such nonlinear equilibrium conditions determine generalized Boltzmann distributions. 

Ions are treated as point charges in the mean-field approximations, which break down when the steric repulsion and correlation are no longer ignorable. To account for steric effects, one simple approach is using local density approximations, in which the excess chemical potential $\mu_i^{ex}$ is approximated as a local function of ionic concentrations~\cite{BazantReview_ACIS09}. Here, we briefly review several commonly used local models in literature. One popular model is based on the statistical mechanics of ions and solvent molecules on lattices~\cite{Bikerman:PM:1942,  BAO_PRL97, Li_Nonlinearity09, LuZhou_BiophysJ11}. The entropy of solvent molecules accounts for the excess chemical potential
\begin{equation}\label{LGmu}
\mu_i^{ex}= -\beta^{-1} \frac{v_i}{v_0} \ln \left(v_0 c_0 \right),
\end{equation}
where $v_0$ is the volume of a solvent molecule and $c_0$ is the solvent concentration defined by
\begin{equation}
\label{c0}
c_0(x) = v_0^{-1} \left[ 1 - \sum_{i=1}^M v_i c_i(x) \right].
\end{equation}
The hard-sphere interactions between ions can be described by the Lennard-Jones (LJ) potential energy, which gives rise to a nonlocal model~\cite{HyonLiuBob_CMS10}. To avoid computationally intractable integro-differential equations, local approximations of nonlocal integrals can be employed to obtain local models~\cite{HorngLinLiuBob_JPCB12, LinBob_CMS14, DingWangZhou_JCP19}. In such a local model, the excess chemical potential is given by 
\begin{equation}\label{LJmu}
\mu_i^{ex}=\beta^{-1} \sum_{j=1}^M \omega_{ij}c_j,
\end{equation}
where $(\omega_{ij})$ is a symmetric matrix to represent the cross interactions between different species and self interactions between ions of the same species.  Alternatively, the excess chemical potential can be determined by 
the Carnahan--Starling (CS) equation of state for hard-sphere liquids of a uniform size $v_i=v$~\cite{BazantReview_ACIS09}:
\begin{equation}\label{CSmu}
\mu_i^{ex}=\beta^{-1} \frac{\phi (8-9\phi+3\phi^2)}{(1-\phi)^3},
\end{equation}
where $\phi=v\sum_{i=1}^M c_i$ is volume fraction of ions. To extend the model to mixtures of unequal size, one can choose the Boublik--Mansoori--Carnahan--Starling--Leland (BMCSL) equation of state to derive the excess potential~\cite{BazantReview_ACIS09}.

For simplicity of presentation, we shall focus on the most popular local steric PB theory, which includes the BVP of the Poisson's equation~\reff{BVP} and the equilibrium conditions~\reff{EqCond} with the excess chemical potential given by~\reff{LGmu}, i.e.,
\begin{equation}\label{GovEqn}
z_{i} \beta e \psi+  \ln\left(\frac{c_{i}}{c_{i}^\infty}\right)-\frac{v_{i}}{v_{0}}\ln \left(\frac{1 - \sum_{j=1}^M v_j c_j }{1 - \sum_{j=1}^M v_j c_j^\infty }\right) =0, ~~1 \leq i \leq M, \qquad   \mbox{in }  \Omega_{\rm w}.
\end{equation}
We remark that the numerical methods presented below are applicable to other local steric PB theories with the excess chemical potential derived from the approximation of LJ potential~\reff{LJmu}, the CS equation of state~\reff{CSmu}, or the BMCSL equation of state. 


\section{Numerical Methods}\label{s:NumMethods}
\subsection{Discretization}\label{ss:Discretization}
We choose a computational domain $\Omega=[-L, L]^3$, and cover the domain with a uniform mesh
$$\Omega_h=\Big\{(x_i,y_j,z_k) | x_i=-L+ ih,~ y_j=-L + jh, ~z_k=-L+ kh,~ i, j, k=0,\dots,N_h+1 \Big\},$$
where $L$ is a positive number and $h=2L/(N_h+1)$ is the uniform grid spacing. Define 
\[
\mathring{\Omega}_h = \Big\{(x_i,y_j,z_k) \in \Omega_h | 1\leq i \leq N_h,  1\leq j \leq N_h, 1\leq k \leq N_h\Big\},
\]
and $ \partial\Omega_h= \Omega_h \backslash \mathring{\Omega}_h$. For a given known function $u(x, y, z)$, we denote $u_{i,j,k}=u(x_i, y_j, z_k)$. For an unknown function $v(x, y, z)$, we denote $v_{i,j,k}\approx v(x_i, y_j, z_k)$ as the numerical approximation of $v$ on the grid point $(x_i, y_j, z_k)$.  


The dielectric coefficient $\ve_r (x)$ determined by a given molecular surface $\Gamma$ has a sharp transition. The numerical instability in the calculations of solvation energies and forces, due to relative location and orientation of biomolecules with respect to the finite-difference grid, has been realized in literature~\cite{JWangRLuo_JCTC12, IBR:CPC:1998}. Mesh refinement could be a solution, but too small mesh resolution is still not affordable in 3D calculations. One approach to reduce the instability due to grid dependence is to smooth the sharp transition of dielectric coefficient across the solute and solvent regions~\cite{DavisMcCammon_JCC91, BruccoleriSharp_JCC97, JWangRLuo_JCTC12, APBS_Baker18, ChakravortyAlexov_JCTC2018, ChakravortyAlexov_FMB2018}. We here introduce a smoothed characteristic function of the solvent region:
\[
\chi^{w, \tau}(x) :=  H_{\tau}[\varphi(x)],
\]
where $\varphi(x)$ is a signed-distance level-set function whose zero level set represents the solute-solvent interface $\Gamma$, and $H_{\tau}(\cdot)$ is a smeared Heaviside function 
\[
H_{\tau}(s) =\left\{
\begin{aligned}
&1~ & &\text{if }~ s> \tau, \\
&\frac12 +\frac{s}{2\tau} +\frac{1}{2\pi} \sin \left(\frac{\pi s}{\tau}\right)~ & &\text{if }~ -\tau \leq s \leq \tau, \\
&0~ & &\text{if }~ s<- \tau.
\end{aligned}
\right.
\]
Here $\tau>0$ is a parameter to control the width of transition layer. Then the dielectric coefficient function becomes
\[
\ve^{r, \tau} (x) = (1- \chi^{w, \tau}) \ve_{\rm m} + \chi^{w, \tau} \ve_{\rm w}.
\]
With such a smooth transition, the BVP of the Poisson's equation for the electrostatic potential $\psi$ reads
\begin{equation}\label{Psi-smooth}
	\left\{
	\begin{aligned}
	& -\nabla  \cdot \ve_0 \ve^{r, \tau} \nabla \psi =\chi^{w, \tau}\sum_{j=1}^{M}z_{j}ec_{j} + \rho^f  \qquad & & \mbox{in } \Omega, \\
	& \psi = \psi^\infty  \qquad & & \mbox{on } \partial \Omega.
	\end{aligned}
	\right.
\end{equation}
Due to the fixed charges represented by Dirac delta functions, the electrostatic potential $\psi$ has singularities inside solute atoms. To get accurate approximations of $\psi$, we consider first solving the reaction potential $\psi^r = \psi - \psi^f$ instead. It follows from~\reff{psif} that
\[
- \ve_0 \ve_{\rm m} \Delta \psi^f=\rho^f.
\]
Therefore, $\psi^r$ satisfies the BVP
\begin{equation}\label{Psir}
	\left\{
	\begin{aligned}
	& -\nabla  \cdot \ve_0 \ve^{r, \tau} \nabla \psi^r =\chi^{w, \tau}\sum_{j=1}^{M}z_{j}ec_{j} + \nabla  \cdot \ve_0 (\ve^{r, \tau} -\ve_{\rm m}) \nabla \psi^f   \qquad & & \mbox{in } \Omega, \\
	& \psi^r = \psi^\infty - \psi^f   \qquad & & \mbox{on } \partial \Omega.
	\end{aligned}
	\right.
\end{equation} 
With the standard central differencing, the BVP~\reff{Psir} is approximated by
\begin{equation}\label{Scheme}
	\left\{
	\begin{aligned}
	& L_h \psi^r_{i,j,k} =\chi^{w, \tau}_{i,j,k} \sum_{l=1}^{M}z_{l}ec^{l}_{i,j,k} + \nabla_h  \cdot \ve_0 (\ve^{r, \tau}_{i,j,k} -\ve_{\rm m}) \nabla_h \psi^f_{i,j,k}   \qquad & & \mbox{in } \mathring{\Omega}_h, \\
	& \psi^r_{i,j,k} = \psi^\infty_{i,j,k} - \psi^f_{i,j,k}   \qquad & & \mbox{on } \partial \Omega_h,
	\end{aligned}
	\right.
\end{equation} 
where $c^{l}_{i,j,k}$ $(l=1,\cdots, M)$ is the numerical approximation of the concentration of the $l$th species, and the discrete Laplacian operator is defined by
\begin{equation}\label{Lh}
\begin{split}
L_h \psi^r_{i,j,k} = -\nabla_h  \cdot \ve_0 \ve^{r, \tau}_{i,j,k} \nabla_h \psi^r_{i,j,k}=&-\frac{\ve_0}{h^2}\Big[\ve^{r, \tau}_{i+\frac{1}{2},j,k}(\psi^r_{i+1,j,k}-\psi^r_{i,j,k})-
\ve^{r, \tau}_{i-\frac{1}{2},j,k}(\psi^r_{i,j,k}- \psi^r_{i-1,j,k}) \\&+\ve^{r, \tau}_{i,j+\frac{1}{2},k}(\psi^r_{i,j+1,k}-\psi^r_{i,j,k})-\ve^{r, \tau}_{i,j-\frac{1}{2},k}(\psi^r_{i,j,k} - \psi^r_{i,j-1,k})\\
&+\ve^{r, \tau}_{i,j,k+\frac{1}{2}}(\psi^r_{i,j,k+1}-\psi^r_{i,j,k})- \ve^{r, \tau}_{i,j,k-\frac{1}{2}}(\psi^r_{i,j,k}- \psi^r_{i,j,k-1})\Big].
\end{split}
\end{equation} 
The term $\nabla_h  \cdot \ve_0 (\ve^{r, \tau}_{i,j,k} -\ve_{\rm m}) \nabla_h \psi^f_{i,j,k}$ is defined analogously. To improve the convergence of reaction field energies~\cite{DavisMcCammon_JCC91, BruccoleriSharp_JCC97}, dielectric coefficients on half grid points are approximated by the harmonic average
\[
\ve^{r, \tau}_{i+\frac{1}{2},j,k}= \frac{2\ve^{r, \tau}_{i,j,k} \ve^{r, \tau}_{i+1,j,k}}{\ve^{r, \tau}_{i,j,k} + \ve^{r, \tau}_{i+1,j,k}}.
\]
Given concentrations $c^{l}_{i,j,k}$, the electrostatic potential can be obtained by $\psi_{i,j,k}=\psi^r_{i,j,k} + \psi^f_{i,j,k}$, where the reaction potential $\psi^r_{i,j,k}$ is obtained by solving the linear system~\reff{Scheme}. 
The nonlinear equilibrium conditions~\reff{GovEqn} can be discretized as 
\begin{equation}\label{R=0}
z_{l} \beta e (\psi^r_{i,j,k} + \psi^f_{i,j,k}) +  \ln\left(\frac{c^{l}_{i,j,k}}{c^{l,\infty}_{i,j,k}}\right)-\frac{v_{l}}{v_{0}}\ln \left(\frac{1 - \sum_{m=1}^M v_{m} c^m_{i,j,k} }{1 - \sum_{m=1}^M v_{m} c^{m, \infty}_{i,j,k}} \right) =0,~~ l=1, \cdots, M, ~~\mbox{in } \Omega_h.
\end{equation}
In summary, we have the following nonlinear system after discretization:
\begin{equation}\label{FullSys}
	\left\{
	\begin{aligned}
	&z_{l} \beta e (\psi^r_{i,j,k} + \psi^f_{i,j,k}) +  \ln\left(\frac{c^{l}_{i,j,k}}{c^{l,\infty}_{i,j,k}}\right)-\frac{v_{l}}{v_{0}}\ln \left(\frac{1 - \sum_{m=1}^M v_{m} c^m_{i,j,k} }{1 - \sum_{m=1}^M v_{m} c^{m, \infty}_{i,j,k}} \right) =0,~l=1,..., M, & & \mbox{in } \Omega_h, \\
	& L_h \psi^r_{i,j,k} =\chi^{w, \tau}_{i,j,k} \sum_{l=1}^{M}z_{l}ec^{l}_{i,j,k} + \nabla_h  \cdot \ve_0 (\ve^{r, \tau}_{i,j,k} -\ve_{\rm m}) \nabla_h \psi^f_{i,j,k}  & & \mbox{in } \mathring{\Omega}_h, \\
	& \psi^r_{i,j,k} = \psi^\infty_{i,j,k} - \psi^f_{i,j,k}  & & \mbox{on } \partial \Omega_h.
	\end{aligned}
	\right.
\end{equation} 

Numerical iterative methods for solving the coupled system~\reff{FullSys} have been proposed in literature to obtain the unknowns $\{c^{1}_{i,j,k}, \cdots, c^{M}_{i,j,k}, \psi^r_{i,j,k}\}_{i,j,k=1,\cdots, N_h}$. For instance, a nonlinear successive over-relaxation (SOR) scheme has been developed in~\cite{Xie_IJNAM2017}, in which the linear system~\reff{Scheme} was solved with concentrations given in a previous iteration step, and the nonlinear system~\reff{R=0} was updated with a Newton iteration method for each species in a Gauss--Seidel manner, with the electrostatic potential and un-updated concentrations of other species given in a previous step. The unknowns on grid points were coupled together in the SOR iterations. 
As observed in the work~\cite{JiZhou_CMS19}, the unknowns of concentrations in~\reff{R=0} can be \emph{spatially decoupled} for each grid point with a given electrostatic potential. Based on such an observation, efficient and memory-saving iteration methods for~\reff{R=0} on every single grid point $(x_i,y_j,z_k)$ can be designed. 
In this work, we shall propose a novel and super efficient Newton iteration method with truncation for the coupled system~\reff{FullSys}, by treating the concentrations as implicit scalar functions of the electrostatic potential, i.e., generalized Boltzmann distributions.  
\subsection{Generalized Boltzmann Distribution}
This section focuses on solving the nonlinear equilibrium conditions~\reff{GovEqn} with a given electrostatic potential $\psi$, to obtain generalized Boltzmann distributions. The existence, uniqueness, boundness, and smoothness of such distributions are established in the following theorem. 
\begin{theorem}
\label{t:GBD}
For each $\psi \in \R$,  there exists a unique solution $c_i = B_i (\psi)$ $(i = 1, \dots, M)$ to the equilibrium conditions~\reff{GovEqn}.  Moreover, each generalized Boltzmann distribution $B_i(\psi): \R \to (0, v_i^{-1})$ is a smooth function.
\end{theorem}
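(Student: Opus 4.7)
The plan is to reduce the $M$-dimensional equilibrium system~\reff{GovEqn} to a single scalar equation in the ``solvent fraction'' variable $s := 1 - \sum_j v_j c_j$, and then to invoke elementary real analysis together with the implicit function theorem. First I would isolate the logarithm $\ln(c_i/c_i^\infty)$ in each of the $M$ equations and exponentiate, obtaining the explicit representation
\[
c_i = a_i(\psi)\, s^{v_i/v_0}, \qquad a_i(\psi) := \frac{c_i^\infty\, e^{-z_i \beta e \psi}}{(s^\infty)^{v_i/v_0}},
\]
where $s^\infty := 1 - \sum_j v_j c_j^\infty \in (0,1)$ under the standing physical assumption that the bulk volume fraction of ions is strictly less than one. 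Substituting these formulas back into the definition of $s$ collapses~\reff{GovEqn} into the single scalar equation
\[
F(s;\psi) := s - 1 + \sum_{j=1}^M v_j\, a_j(\psi)\, s^{v_j/v_0} = 0, \qquad s \in (0,1).
\]

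Next I would establish existence and uniqueness of a root of $F(\cdot;\psi)$ in $(0,1)$. Continuity together with $F(0;\psi) = -1 < 0$ and $F(1;\psi) = \sum_j v_j a_j(\psi) > 0$ yields a root by the intermediate value theorem, while strict monotonicity
\[
\partial_s F(s;\psi) = 1 + \sum_{j=1}^M \frac{v_j^2}{v_0}\, a_j(\psi)\, s^{v_j/v_0 - 1} > 0 \qquad \text{for } s \in (0,1)
\]
gives uniqueness. Calling this unique root $s_*(\psi)$, I would define $B_i(\psi) := a_i(\psi)\, s_*(\psi)^{v_i/v_0}$, which by construction furnishes the unique solution to~\reff{GovEqn}. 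The bounds $B_i(\psi) \in (0, v_i^{-1})$ are then immediate: positivity is clear from the formula, and
\[
v_i B_i(\psi) \le \sum_{j=1}^M v_j B_j(\psi) = 1 - s_*(\psi) < 1.
\]

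For smoothness, note that $F$ is jointly $C^\infty$ on $(0,1) \times \R$ and $\partial_s F > 0$ there, so the implicit function theorem yields $s_* \in C^\infty(\R;(0,1))$; since each $a_i$ is smooth in $\psi$ and $s \mapsto s^{v_i/v_0}$ is smooth on $(0,1)$, the composition $B_i \in C^\infty(\R;(0,v_i^{-1}))$ follows. The only real content of the argument lies in the algebraic reduction to the one-dimensional equation for $s$; this reduction is possible because, after exponentiation, each $c_i$ depends on the other concentrations only through the single aggregate $s$, a structural peculiarity of the lattice-gas excess chemical potential~\reff{LGmu}. The remaining IVT/monotonicity/IFT steps are routine, so I do not anticipate a serious obstacle beyond making explicit the non-saturation hypothesis $s^\infty > 0$ on the bulk data.
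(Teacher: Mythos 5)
Your proposal is correct and follows essentially the same route as the paper: the same reduction to the scalar equation for the solvent fraction (your $s$ is the paper's $\gamma$), the same IVT-plus-monotonicity argument for a unique root in $(0,1)$, the same derivation of the bounds from $\sum_j v_j B_j = 1 - s_* < 1$, and the same appeal to the implicit function theorem for smoothness. Your explicit flagging of the non-saturation hypothesis $s^\infty>0$ matches the paper's standing assumption that $\gamma^\infty\in(0,1)$.
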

\begin{proof}

The nonlinear equilibrium conditions~\reff{GovEqn} can be rewritten as
\begin{equation}\label{ImpBD}
c_i= c_i^{\infty} \left(\frac{\gamma}{\gamma^\infty} \right)^{v_i/v_0} \exp(-\beta z_i e \psi), ~~i = 1, \cdots, M,
\end{equation}
with the volume fraction of solvent $\gamma$ defined by~\cite{LiLiuXuZhou_Nonlinearity13}
\begin{equation}\label{gamma}
\gamma=1-\sum_{j=1}^M v_j c_j.
\end{equation}
Analogously, $\gamma^{\infty} = 1-\sum_{j=1}^M v_j c_j^\infty$, where bulk concentrations $c_j^\infty$ are chosen so that $\gamma^{\infty} \in (0, 1)$. A combination of~\reff{ImpBD} and~\reff{gamma} leads to an equation for $\gamma$:
\begin{equation}\label{fgamma}
f(\gamma):= \gamma -1 + \sum_{j=1}^M v_j c_j^\infty \left(\frac{\gamma}{\gamma^\infty} \right)^{v_j/v_0} \exp(-\beta z_j e \psi) = 0.
\end{equation}
There is a unique root for $f(\cdot)$ in the interval $(0, 1)$. Indeed, it is easy to verify that $f(0) =-1 <0$ and 
\[
f(1)=\sum_{j=1}^M v_j c_j^\infty \left(\frac{1}{\gamma^\infty} \right)^{v_j/v_0} \exp(-\beta z_j e \psi) >0.
\]
Simple calculations show that
\[
f'(\gamma) = 1+  \sum_{j=1}^M \frac{v_j^2 c_j^\infty \gamma^{v_j/v_0-1}}{v_0  \left({\gamma^\infty} \right)^{v_j/v_0} }   \exp(-\beta z_j e \psi)  >0,~~ \forall \gamma \in (0, 1).
\]
Therefore, $f(\cdot)$ is a continuous, increasing function with a unique root in $(0, 1)$. It follows that for each $\psi \in \R$, there exists a unique corresponding $\gamma \in (0, 1)$. This establishes the function $\gamma=\gamma(\psi): \R \to (0, 1)$. Further by~\reff{ImpBD}, one can see that there exists a unique $c_i$ for each $\psi \in \R$. For each $\psi \in \R$, we define functions $B_i(\psi) = c_i$, $i =1, \cdots, M$, i.e., the generalized Boltzmann distributions.  It follows from~\reff{ImpBD} and \reff{gamma} with $\gamma \in (0, 1)$ that $c_i=B_i(\psi) \in (0, v_i^{-1})$. From the equation~\reff{fgamma}, one can show by the implicit function theorem that $\gamma(\psi)$ is a smooth function. Therefore, by~\reff{ImpBD}, $B_i(\psi)$ is a smooth function as well.    
 \end{proof}
For each $\psi \in \R$, the corresponding $\gamma$ can be calculated efficiently with a Newton iteration scheme
\begin{equation}\label{Nt4f}
\gamma^{l+1}=\gamma^{l} - \frac{f(\gamma^l)}{f'(\gamma^l)}.
\end{equation}
With the obtained $\gamma$, the numerical concentrations for each $\psi$ can be calculated via~\reff{ImpBD}.

\subsection{Precomputing Speed-Up Strategy}\label{ss:Precomput}
With the implicit generalized Boltzmann distributions $B_1(\psi), \cdots, B_M(\psi)$ that are numerically available for each given $\psi$, the unknowns $\{c^{1}_{i,j,k}, \cdots, c^{M}_{i,j,k}, \psi^r_{i,j,k}\}$ on $\Omega_h$ governed by the coupled system~\reff{FullSys} can be obtained more efficiently by solving
\begin{equation}\label{GBDScheme}
	\left\{
	\begin{aligned}
	& L_h \psi^r_{i,j,k} =\chi^{w, \tau}_{i,j,k} \sum_{l=1}^{M}z_{l}e B_{l}(\psi^f_{i,j,k}+\psi^r_{i,j,k}) + \nabla_h  \cdot \ve_0 (\ve^{r, \tau}_{i,j,k} -\ve_{\rm m}) \nabla_h \psi^f_{i,j,k} ~ & & \mbox{in } \mathring{\Omega}_h, \\
	& \psi^r_{i,j,k} = \psi^\infty_{i,j,k} - \psi^f_{i,j,k} ~ & & \mbox{on } \partial \Omega_h.
	\end{aligned}
	\right.
\end{equation}
Here, instead of $\{c^{1}_{i,j,k}, \cdots, c^{M}_{i,j,k}, \psi^r_{i,j,k}\}$, $\{\psi^r_{i,j,k}\}$ is served as the iterative variable.
To solve the nonlinear system~\reff{GBDScheme} iteratively, we need to evaluate for many times the generalized Boltzmann distributions $B_l(\cdot)$ and  $B'_l (\cdot)$, if Newton-type iteration methods are considered.   

To speed-up computations, we propose to precompute the functions $B_l(\psi)$ and $B'_l (\psi)$ for $l=1, \cdots, M$, and store the functions on a mesh of $\psi$ in a certain interval. For instance, we consider $\psi \in [\psi_L, \psi_R]$ and assume that the interval is large enough to cover the range we are interested in. The two end points $\psi_L$ and $\psi_R$ can be estimated in specific applications; See~\reff{Spsi} for their estimation. We cover the interval with a mesh $p_i=\psi_L+i h_\psi$ for $i=0, \cdots, N_\psi$, where the mesh spacing $h_\psi=(\psi_R-\psi_L)/N_\psi$.  For each grid point $\psi=p_i$, we first compute $\gamma(p_i)$ with the iteration scheme~\reff{Nt4f}, then compute $B_l(p_i)=c_l$ with~\reff{ImpBD}. Note that $\gamma(p_i)$ can be an  initial guess for the iterations~\reff{Nt4f} when computing $\gamma(p_{i+1})$. Such a continuation method can provide very good initial guesses for the Newton iterations. 

For each $p_i$, it follows from~\reff{ImpBD} that 
\begin{equation}\label{CPrime}
B'_l (p_i) = \left( \frac{v_l \gamma'(p_i)}{v_0 \gamma(p_i)} - \beta z_l e \right) B_l(p_i), 
\end{equation}
where
\begin{equation}\label{GammaPrime}
\gamma' (p_i) =  \frac{\beta v_0 e \gamma(p_i) \sum_{l=1}^M z_l v_l B_l (p_i) }{v_0 \gamma(p_i) + \sum_{l=1}^M v_l^2 B_l (p_i) }.
\end{equation}
By Theorem~\ref{t:GBD}, we know that $B_l(\cdot)$ and $\gamma(\cdot)$ are smooth functions. Thus, we can  compute the derivatives alternatively with high-order difference schemes, e.g.,
\[
B'_l (p_i) \approx \frac{-B_l(p_{i+2}) +8 B_l(p_{i+1}) - 8B_l(p_{i-1}) +B_l(p_{i-2})}{12 h_\psi},
\]
with the stored data $\left\{B_l(p_i)\right\}_{i=0}^{N_\psi}$.  Therefore, we can precompute and store the vectors $\left\{B_l(p_i)\right\}_{i=0}^{N_\psi}$ and $\left\{B_l'(p_i)\right\}_{i=0}^{N_\psi}$ for $l=1, \cdots, M$.   To save memory, we can alternatively store the vectors $\left\{\gamma(p_i)\right\}_{i=0}^{N_\psi}$ and $\left\{\gamma'(p_i)\right\}_{i=0}^{N_\psi}$ instead, especially when the number of ionic species $M$ under consideration is large. With $\left\{\gamma(p_i)\right\}_{i=0}^{N_\psi}$ and $\left\{\gamma'(p_i)\right\}_{i=0}^{N_\psi}$ stored, we then can compute $\left\{B_l(p_i)\right\}_{i=0}^{N_\psi}$ and $\left\{B_l'(p_i)\right\}_{i=0}^{N_\psi}$ directly with~\reff{ImpBD} and~\reff{CPrime}, respectively.

By the smoothness of  $B_l(\cdot)$ and $\gamma(\cdot)$, we can use high-order interpolation schemes to interpolate  $B_{l}(\psi)$ and $B'_{l}(\psi)$ for any given $\psi$ with stored data $\left\{B_l(p_i)\right\}_{i=0}^{N_\psi}$ and $\left\{B_l'(p_i)\right\}_{i=0}^{N_\psi}$, when solving~\reff{GBDScheme} iteratively.  We remark that such a precomputing-interpolation strategy is applicable to other local steric PB theories with implicit generalized Boltzmann distributions $B_1(\psi), \cdots, B_M(\psi)$ determined by equilibrium conditions~\reff{EqCond}, in which the excess chemical potential can be derived from the approximation of LJ potential~\reff{LJmu}, the CS equation of state~\reff{CSmu}, or the BMCSL equation of state.  Furthermore, such a precomputing and interpolation strategy makes our algorithm for solving steric PB theories as efficient as for solving the classical PB theory. 
\subsection{Newton Iteration Method with Truncation}
The nonlinear system~\reff{GBDScheme} can be rewritten in a matrix form
\begin{equation}\label{matrix form}
A {\bpsi} =G({\bpsi}),
\end{equation}
where $\bpsi=(\psi^r_{1,1,1}, \psi^r_{2,1,1}, \cdots, \psi^r_{N_h,N_h,N_h})^T$ is the unknown vector, $A$ is the coefficient matrix corresponding to the discrete Laplacian, and $G({\bpsi})=(g(\psi^r_{1,1,1}), g(\psi^r_{2,1,1}), \cdots, g(\psi^r_{N_h,N_h,N_h}))^T+\bfb$ is a column vector. Here $g(\psi^r_{i,j,k}):=\chi^{w, \tau}_{i,j,k} \sum_{l=1}^{M}z_{l}e B_{l}(\psi^f_{i,j,k}+\psi^r_{i,j,k})$ and the column vector $\bfb$ results from boundary conditions and the known term $ \nabla_h  \cdot \ve_0 (\ve^{r, \tau}_{i,j,k} -\ve_{\rm m}) \nabla_h \psi^f_{i,j,k}$. It is standard to show that $A$ is a symmetric positive definite matrix with positive diagonal elements and negative off-diagonal elements. Define the residual function $F(\cdot): \R^{N_h^3} \to \R^{N_h^3}$ by $ F(\bpsi): =A\bpsi -G(\bpsi)$, and its Jacobian matrix $F'(\cdot): \R^{N_h^3} \to \R^{N_h^3 \times N_h^3}$ by $F'(\bpsi)=A-G'(\bpsi)$, where $G'(\bpsi)=\text{{\rm diag}} (g'(\psi^r_{1,1,1}), \cdots, g'(\psi^r_{N_h,N_h,N_h}))$. The existence and uniqueness of the solution to the nonlinear system~\reff{matrix form} can be established in the following theorem.
\begin{theorem}\label{Existence}
There exists a unique solution to the nonlinear system~\reff{matrix form}.
\end{theorem}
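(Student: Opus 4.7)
The plan is to treat the problem variationally: I would construct a strictly convex, coercive energy $E: \R^{N_h^3} \to \R$ whose gradient is exactly $F(\bpsi) = A\bpsi - G(\bpsi)$, so that a unique minimizer of $E$ corresponds to a unique solution of the nonlinear system~\reff{matrix form}. This follows the outline announced in the abstract, where the discrete solution is identified as the unique minimizer of a convex energy functional.

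Concretely, I would fix a nodewise antiderivative $\Phi_{i,j,k}(\cdot)$ of $g(\cdot)$ (treating $\chi^{w,\tau}_{i,j,k}$ and $\psi^f_{i,j,k}$ as frozen data at each grid point) and set
\[
E(\bpsi) = \tfrac{1}{2}\bpsi^T A \bpsi - \bfb^T \bpsi - \sum_{i,j,k} \Phi_{i,j,k}(\psi^r_{i,j,k}).
\]
A direct differentiation gives $\nabla E(\bpsi) = A\bpsi - G(\bpsi) = F(\bpsi)$, and the Hessian equals $A - G'(\bpsi) = A + \text{diag}\left(-g'(\psi^r_{i,j,k})\right)$. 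Since $A$ is SPD by the stated structure of the discrete Laplacian, strict convexity of $E$ reduces to the pointwise monotonicity $g'(\psi) \leq 0$.

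The main obstacle I anticipate is verifying this monotonicity, which is where the specific structure of the generalized Boltzmann distributions enters. Because $g'(\psi) = \chi^{w,\tau}_{i,j,k}\, e \sum_l z_l B'_l$, I would substitute the closed-form expressions~\reff{CPrime} and~\reff{GammaPrime} to rewrite
\[
\sum_l z_l B'_l = \beta e \left[\frac{\left(\sum_l z_l v_l B_l\right)^2}{v_0\gamma + \sum_l v_l^2 B_l} - \sum_l z_l^2 B_l\right],
\]
and then apply the Cauchy--Schwarz inequality to the vectors $\left(z_l\sqrt{B_l}\right)$ and $\left(v_l\sqrt{B_l}\right)$ to bound $\left(\sum_l z_l v_l B_l\right)^2 \leq \left(\sum_l z_l^2 B_l\right)\left(\sum_l v_l^2 B_l\right)$. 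Since $v_0\gamma > 0$ by Theorem~\ref{t:GBD}, the ratio $\left(\sum_l v_l^2 B_l\right)/\left(v_0\gamma + \sum_l v_l^2 B_l\right)$ is strictly less than $1$, yielding $\sum_l z_l B'_l < 0$ and therefore $g'(\psi) \leq 0$ everywhere.

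With strict convexity in hand, coercivity comes essentially for free from Theorem~\ref{t:GBD}: since $B_l(\psi) \in (0, v_l^{-1})$, the function $g$ is uniformly bounded, each $\Phi_{i,j,k}$ grows at most linearly in $\psi$, and the SPD quadratic $\tfrac{1}{2}\bpsi^T A\bpsi$ dominates as $\|\bpsi\| \to \infty$. A strictly convex, coercive $C^1$ function on $\R^{N_h^3}$ attains its infimum at a unique critical point, which is the unique solution of~\reff{matrix form}.
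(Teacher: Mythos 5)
Your proposal is correct and follows essentially the same route as the paper: both construct the energy $E(\bpsi)=\tfrac12\bpsi^TA\bpsi-\sum\int_0^{\psi^r}g(s)\,ds-\bfb^T\bpsi$ whose gradient is $F$, and both establish convexity by combining the formulas~\reff{CPrime}--\reff{GammaPrime} with the Cauchy--Schwarz inequality to show $\sum_l z_l B_l'(\psi)\le 0$, hence $\nabla^2E=A-G'(\bpsi)$ is positive definite. If anything, your argument is slightly more complete, since you explicitly supply the coercivity of $E$ (via the boundedness of $g$ from Theorem~\ref{t:GBD}) needed to guarantee that the unique critical point is actually attained, a step the paper's proof leaves implicit.
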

\begin{proof}
Define an energy $E: \R^{N_h^3} \to \R$ by
\begin{equation}\label{E}
E(\bita) = \frac12 \bita^{T} A  \bita - \sum_{l=1}^{N_h^3} \int_{0}^{\eta_l} g(s) ds- \bfb^T\bita,~ \forall  \bita=(\eta_1, \eta_2, \cdots, \eta_{N_h^3})^T \in \R^{N_h^3}.
\end{equation}
By direction calculations, we have 
\begin{equation*}
\nabla_{\bita}E (\bita) =F(\bita) =  A \bita - G(\bita)~\text{and} ~ \nabla_{\bita}^2 E (\bita) = F'(\bita) = A - G'(\bita).
\end{equation*}
We next prove that the function $E(\eta)$ is a convex function. It follows from~\reff{CPrime} and \reff{GammaPrime} that for any $\psi \in \R$,
\begin{equation*}
	\begin{split}
	\sum\limits_{l=1}^M z_lB_l'(\psi)&=\frac{1}{v_0 \gamma(\psi)}\sum\limits_{l=1}^M\left[z_lv_lB_l\gamma'(\psi)-\beta e v_0\gamma(\psi) z_l^2 B_l\right]\\
	&=-\frac{\beta e}{v_0 \gamma(\psi)+\sum\limits_{j=1}^M v^2_jB_j}\left[v_0\gamma(\psi)\sum\limits_{j=1}^M z_j^2 B_j+\left(\sum\limits_{l=1}^M z_l^2 B_l\right) \left(\sum\limits_{j=1}^M v_j^2 B_j\right)-\left(\sum\limits_{l=1}^M z_lv_lB_l\right)^2\right].
	\end{split}
\end{equation*}
By the Cauchy--Schwarz inequality,  we have
\begin{equation}\label{csum>=0}
	\sum\limits_{l=1}^M z_lB_l'(\psi)\leq-\frac{\beta e v_0\gamma(\psi)}{v_0 \gamma(\psi)+\sum\limits_{j=1}^M v^2_jB_j} \sum\limits_{j=1}^M z_j^2 B_j\leq 0.
\end{equation}
Therefore, for any $\bd=(d_1, \cdots, d_{N^3_h})^T \in \R^{N^3_h}$,
\[
\bd^T \nabla_{\bita}^2 E (\bita) \bd = \bd^T A \bd - \sum_{m=1}^{N^3_h} g'(\eta_{m}) d_{m}^2 \geq 0,
\]
where the positive definiteness of $A$, the definition of $g'$, and the conclusion of~\reff{csum>=0} have been used in the last step. Hence, the function $E(\bita)$ is a convex function. Therefore, there exists a unique minimizer $\bpsi$ of $E(\bita)$ satisfying $\nabla_{\bita}E (\bpsi) = 0$, i.e., the nonlinear system~\reff{matrix form}. 
\end{proof}
\subsubsection{Upper and Lower Estimates}
It follows from Theorem~\ref{t:GBD} that $0<c_l=B_l(\psi)<v_l^{-1}$ and $\gamma (\psi) \in (0, 1)$ for $\psi \in \R$. Such bounds can help establish upper and lower estimates for the unknown $\bpsi$. Define grid functions $u^+$ and $u^-$ by the following problems
\begin{equation}\label{DefU}
  \left\{\begin{aligned}
  &L_h u_{i,j,k}^+=  \chi^{w, \tau}_{i,j,k} e \max \limits_{1\leq l \leq M} \frac{ z_l }{v_l}  + b_{i,j,k} & \mbox{in }  \mathring{\Omega}_h,\\
  &u_{i,j,k}^{+}=  \psi^\infty_{i,j,k} - \psi^f_{i,j,k}, &  \mbox{on }  \partial\Omega_h,
  \end{aligned} \right.
  \quad
  \left\{\begin{aligned}
  &L_h u_{i,j,k}^-= \chi^{w, \tau}_{i,j,k} e \min \limits_{1\leq l \leq M} \frac{ z_l }{v_l}   +b_{i,j,k} & \mbox{in } \mathring{\Omega}_h,\\
  &u_{i,j,k}^-= \psi^\infty_{i,j,k} - \psi^f_{i,j,k} & \mbox{on } \partial\Omega_h,
  \end{aligned}\right.
\end{equation}
where $b_{i,j,k}$ are components of the vector $\bfb$. From the discretization of the Laplacian, we have the following standard discrete extremum principle without giving its proof.
\begin{lemma}\label{t:ExtPrin}(Extremum Principle)
 Let $u$ be a grid function defined on $\Omega_h$. Let $L_h$ be the discrete Laplacian operator defined by~\reff{Lh}. Then
\begin{enumerate}[(a)]
  \item If $u$ satisfies $L_h u_{i,j,k}\leq 0$, $(x_i,y_j,z_k)\in \mathring{\Omega}_h$, then $\max\limits_{\Omega_h} u_{i,j,k}\leq\max\limits_{\partial\Omega_h} u_{i,j,k}$.
  \item If $u$ satisfies $L_h u_{i,j,k}\geq 0$, $(x_i,y_j,z_k)\in \mathring{\Omega}_h$, then $\min\limits_{\Omega_h} u_{i,j,k}\geq\min\limits_{\partial\Omega_h} u_{i,j,k}$.
\end{enumerate}
\end{lemma}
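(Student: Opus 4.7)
The plan is to rewrite $L_h$ in a form that makes the discrete extremum principle immediate. Collecting the terms in~\reff{Lh}, I obtain the identity
\begin{equation*}
L_h u_{i,j,k} \;=\; \sum_{(p,q,r) \sim (i,j,k)} b_{(p,q,r)} \bigl(u_{i,j,k} - u_{p,q,r}\bigr),
\end{equation*}
where the sum runs over the six nearest grid neighbors of $(i,j,k)$ and each $b_{(p,q,r)} = \ve_0 h^{-2} \ve^{r,\tau}_{\cdot\pm 1/2,\cdot,\cdot} > 0$. Positivity of the off-diagonal weights is automatic, since each half-grid dielectric value is a harmonic mean of two strictly positive quantities $\ve_{\rm m}, \ve_{\rm w}$. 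This local identity is the only structural fact the argument will use.

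For part (a), I would argue by contradiction. Assume $M := \max_{\Omega_h} u_{i,j,k} > \max_{\partial\Omega_h} u_{i,j,k}$, so that the set $S$ of points in $\mathring{\Omega}_h$ attaining $M$ is nonempty. Fix any $(i,j,k) \in S$; then each difference $u_{i,j,k} - u_{p,q,r}$ is nonnegative, so the displayed identity expresses $L_h u_{i,j,k}$ as a sum of nonnegative terms. Combining with the hypothesis $L_h u_{i,j,k} \leq 0$ and the strict positivity of every $b_{(p,q,r)}$, each such term must in fact vanish, which pins $u_{p,q,r} = M$ at all six neighbors of $(i,j,k)$. Iterating this propagation across the connected box lattice $\Omega_h$, finitely many nearest-neighbor hops lead from $(i,j,k)$ to a point of $\partial\Omega_h$ at which $u = M$, contradicting the standing assumption. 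Therefore $\max_{\Omega_h} u_{i,j,k} \leq \max_{\partial\Omega_h} u_{i,j,k}$.

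Part (b) follows at once by applying (a) to the grid function $-u$ and invoking linearity of $L_h$. I do not anticipate any real obstacle: the only items requiring care are the strict positivity of the harmonic-average coefficients (needed so that the equality step actually forces $u_{p,q,r} = M$, rather than only $u_{p,q,r} \leq M$) and the elementary fact that $\mathring{\Omega}_h$ is connected to $\partial\Omega_h$ by finite chains of nearest-neighbor steps. Both are routine, which is precisely why the author labels the result as standard and omits the proof.
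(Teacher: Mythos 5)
Your proof is correct, and the paper itself gives no proof to compare against --- the authors explicitly state the lemma ``without giving its proof'' as a standard fact. Your argument (rewriting $L_h u_{i,j,k}$ as a positively weighted sum of differences $u_{i,j,k}-u_{p,q,r}$, forcing propagation of an interior maximum to the boundary, and deducing (b) from (a) via $-u$) is precisely the standard discrete maximum-principle argument the authors are invoking, and the one point genuinely requiring care --- strict positivity of the harmonic-averaged coefficients $\ve^{r,\tau}_{i\pm 1/2,j,k}$, which holds since $\ve^{r,\tau}\in[\ve_{\rm m},\ve_{\rm w}]$ --- is handled.
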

Clearly, we have 
\[
g(\psi^r_{i,j,k})
                      = \chi^{w, \tau}_{i,j,k} \sum_{l=1}^{M}\frac{z_{l}}{v_l} e v_l B_{l}(\psi^f_{i,j,k}+\psi^r_{i,j,k}) \in (\chi^{w, \tau}_{i,j,k} e \min \limits_{1\leq l \leq M} \frac{ z_l }{v_l} ,~ \chi^{w, \tau}_{i,j,k} e \max \limits_{1\leq l \leq M} \frac{ z_l }{v_l}),
\]
where the fact that the volume fraction $v_l B_l \in (0, 1)$ has been used. By the extremum principle, we have the solution $\bpsi \in S_{\psi}$, where  the set $S_{\psi}$ is given by
\begin{equation}\label{Spsi}
  S_{\psi}=\left\{\bpsi\in \R^{N_h^3} | \bu^- \preceq  \bpsi \preceq \bu^+ \right\},
\end{equation}
with $\bu^{\pm}=(u^\pm_{1,1,1}, u^\pm_{2,1,1}, \cdots, u^\pm_{N_h,N_h,N_h})^T$.
We now propose the following Newton iteration method with truncation for $F(\bpsi) =0$, with computed upper and lower estimates $\bu^+$ and $\bu^-$.
\begin{algorithm} 
\caption{Newton Iterative method with Truncation}
\label{alg:tnewtonsor}
\begin{algorithmic}[1]
\STATE  
Compute the upper and lower estimates $\bu^+$ and $\bu^-$. Initialize $\bpsi^{(0)}$ so that $\bu^- \preceq  \bpsi^{(0)} \preceq \bu^+$. Let the iteration step $n=0$. Choose a stopping tolerance $tol$. 
\WHILE{$\|F(\bpsi^{(n)}) \| > tol$}{ 
\STATE Compute $\bdelta^{(n)}=-\omega F'(\bpsi^{(n)})^{-1} F(\bpsi^{(n)})$, where $\omega \in (0, 1]$ is determined by backtracking so that $$\|F(\bpsi^{(n)}+\bdelta^{(n)}) \| \leq \|F(\bpsi^{(n)}) \|$$

\STATE Update $\overline{\bpsi}^{(n+1)}=\bpsi^{(n)}+\bdelta^{(n)}$

\STATE    
Find $\bpsi^{(n+1)}$ by truncations
  \begin{equation}\label{TruncationStep}
  \bpsi_{m}^{(n+1)}=\left\{
  \begin{aligned}
  \bu^+_{m} &\qquad\text{if }~\overline{\bpsi}_{m}^{(n+1)}>\bu^+_{m}, \\
  \overline{\bpsi}_{m}^{(n+1)} &\qquad\text{if }~ \bu^-_{m} \leq \overline{\bpsi}_{m}^{(n+1)}\leq \bu^+_{m},\\
  \bu^-_{m} &\qquad\text{if }~ \overline{\bpsi}_{m}^{(n+1)}<\bu^-_{m}, 
  \end{aligned}
  \right.
  \end{equation}
  for $m=1,2,...,N_h^3$ being the component index of corresponding vectors. 
 \STATE    Update $n \leftarrow n+1$ and $\bpsi^{(n)} \leftarrow \bpsi^{(n+1)}$.
}
\ENDWHILE 
\end{algorithmic}
\end{algorithm}	

\begin{remark}
Note that the proposed algorithm for the coupled system~\reff{Scheme} and \reff{R=0} is much more efficient than algorithms using $\{c^{1}_{i,j,k}, \cdots, c^{M}_{i,j,k}, \psi^r_{i,j,k}\}_{i,j,k=1,\cdots, N_h}$ as the iterative unknowns. The advantage is achieved due to several aspects: First, the unknown variable of the proposed algorithm only involves the electrostatic potential, reducing memory from $\mathcal{O} \left((M+1)N^3\right)$ to $\mathcal{O} (N^3)$; Second, the Newton iteration method with truncation uses the information on the derivative of concentrations with respect to the electrostatic potential, which makes the super-linear convergence of the whole algorithm possible; Third, the precomputing and interpolation strategy detailed in Section~\ref{ss:Precomput} can further speed up the calculation of the matrix of $F'(\bpsi^{(n)})$. The precomputing and interpolation strategy can be applied to solve other local steric PB theories mentioned in Section~\ref{s:Theory}, and makes the proposed algorithm for solving steric PB theories as efficient as for solving the classical PB theory. 
\end{remark}

\subsubsection{Algorithm Analysis}
This subsection presents numerical analysis on the proposed Newton iteration method with truncation.  In each Newton iteration step, a linear system involving the Jacobian matrix $F'(\bpsi^{(n)})$ needs to be solved. Its solvability is guaranteed by the following Lemma.
\begin{lemma}\label{FInvertibility}
The Jacobian matrix $F'(\bpsi)=A-G'(\bpsi)$ in Algorithm~\ref{alg:tnewtonsor} is invertible for any $\bpsi$.
\end{lemma}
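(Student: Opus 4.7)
The plan is to exploit the sign information already gathered in the proof of Theorem~\ref{Existence}. Recall that $F'(\bpsi) = A - G'(\bpsi)$, where $A$ is the symmetric positive definite discrete Laplacian matrix and $G'(\bpsi)$ is the diagonal matrix with entries $g'(\psi^r_{i,j,k})$. The strategy is to show that $-G'(\bpsi)$ is diagonal with non-negative entries, so that $F'(\bpsi)$ is the sum of a positive definite matrix and a positive semidefinite diagonal, hence positive definite, hence invertible.

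First, I would recall from the definition that
\[
g(\psi^r_{i,j,k}) = \chi^{w,\tau}_{i,j,k} \sum_{l=1}^M z_l e \, B_l(\psi^f_{i,j,k} + \psi^r_{i,j,k}),
\]
so that
\[
g'(\psi^r_{i,j,k}) = \chi^{w,\tau}_{i,j,k} \, e \sum_{l=1}^M z_l B_l'(\psi^f_{i,j,k} + \psi^r_{i,j,k}).
\]
The key input is inequality~\reff{csum>=0} from the proof of Theorem~\ref{Existence}, which states that $\sum_{l=1}^M z_l B_l'(\psi) \le 0$ for every $\psi \in \R$. Combined with $\chi^{w,\tau}_{i,j,k} \ge 0$ and $e > 0$, this immediately gives $g'(\psi^r_{i,j,k}) \le 0$ at every grid point, for any $\bpsi$.

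Hence $-G'(\bpsi)$ is a diagonal matrix with non-negative entries, which is positive semidefinite. Since $A$ is symmetric positive definite, for any nonzero vector $\bd \in \R^{N_h^3}$ we have $\bd^T F'(\bpsi) \bd = \bd^T A \bd + \bd^T(-G'(\bpsi)) \bd > 0$, so $F'(\bpsi)$ is symmetric positive definite and therefore invertible. I do not expect any genuine obstacle here: the entire argument has in fact already been performed inside the proof of Theorem~\ref{Existence} to establish convexity of $E(\bita)$, and the lemma is essentially a restatement of that computation. The only care needed is to observe that the sign conclusion holds pointwise for arbitrary $\bpsi$ (not just at the minimizer), which is clear since inequality~\reff{csum>=0} holds for all $\psi \in \R$.
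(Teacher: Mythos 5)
Your proposal is correct and follows exactly the same route as the paper: both invoke inequality~\reff{csum>=0} to conclude $g'(\psi^r_{i,j,k})\leq 0$ pointwise, so that $-G'(\bpsi)$ is a positive semidefinite diagonal matrix, and then add it to the symmetric positive definite matrix $A$ to conclude that $F'(\bpsi)$ is symmetric positive definite and hence invertible. Your closing observation that the sign condition holds for arbitrary $\bpsi$, not just at the minimizer, is exactly the point the paper relies on as well.
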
  
\begin{proof}
It follows from~\reff{csum>=0} that 
\[
g'(\psi^r_{i,j,k})=\chi^{w, \tau}_{i,j,k} \sum_{l=1}^{M}z_{l}e B'_{l}(\psi^f_{i,j,k}+\psi^r_{i,j,k}) \leq 0.
\]
By the facts that $G'(\bpsi)=\text{{\rm diag}} (g'(\psi^r_{1,1,1}), \cdots, g'(\psi^r_{N_h,N_h,N_h}))$ and $A$ is a symmetric positive definite matrix corresponding to $L_h$, we obtain that the Jacobian matrix $F'(\bpsi)=A-G'(\bpsi)$ is a symmetric positive definite matrix as well. This completes the proof.
\end{proof}
For the truncation step in the algorithm, we have the following  Lemma.
\begin{lemma}{\label{Truncation}}
The truncation step in~\reff{TruncationStep} has the following properties:
\begin{compactenum}
\item[\mbox{\rm (1)}]
The energy is decreasing: $$E(\overline{\bpsi}^{(n+1)}) \geq  E(\bpsi^{(n+1)});$$
\item[\mbox{\rm (2)}]
The error is decreasing: $$\left| \overline{\psi}_{m}^{(n+1)}-\psi_{m}^* \right| \geq \left| \psi_{m}^{(n+1)}-\psi_{m}^* \right|,$$
where $m=1,2,...,N_h^3$ and  $\bpsi^{*}$ is the exact solution, i.e., $F(\bpsi^{*}) = 0$.
\end{compactenum} 
\end{lemma}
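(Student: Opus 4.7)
The plan is to treat the two assertions separately. Part~(2) reduces to a standard componentwise contraction property of box projection, while Part~(1) follows from combining the convexity of the energy $E$ with a sign check on the residual at the truncated iterate.

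For Part~(2), I would first invoke the extremum principle, exactly as in the derivation of~\reff{Spsi}, to conclude that the exact solution satisfies $\bpsi^{*}\in S_{\psi}$, so that $u_m^{-}\leq\psi_m^{*}\leq u_m^{+}$ for every component $m$. A case analysis on the three branches of the truncation rule~\reff{TruncationStep} then closes the argument: if $\overline{\psi}_m^{(n+1)}>u_m^{+}$, the ordering $\psi_m^{*}\leq u_m^{+}=\psi_m^{(n+1)}<\overline{\psi}_m^{(n+1)}$ gives
$|\overline{\psi}_m^{(n+1)}-\psi_m^{*}|=\overline{\psi}_m^{(n+1)}-\psi_m^{*}>u_m^{+}-\psi_m^{*}=|\psi_m^{(n+1)}-\psi_m^{*}|$; the case $\overline{\psi}_m^{(n+1)}<u_m^{-}$ is symmetric, and the interior case gives equality.

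For Part~(1), I would exploit the convexity of $E$ established in Theorem~\ref{Existence} to obtain the first-order inequality
\begin{equation*}
E(\overline{\bpsi}^{(n+1)})-E(\bpsi^{(n+1)})\geq F(\bpsi^{(n+1)})^{T}\bigl(\overline{\bpsi}^{(n+1)}-\bpsi^{(n+1)}\bigr),
\end{equation*}
so it suffices to verify componentwise that $F_m(\bpsi^{(n+1)})\bigl(\overline{\psi}_m^{(n+1)}-\psi_m^{(n+1)}\bigr)\geq 0$, with only truncated coordinates contributing. In the upper-truncated case $\psi_m^{(n+1)}=u_m^{+}$, I would use the defining equation $(A\bu^{+})_m=\chi_{m}^{w,\tau}e\max_{1\leq l\leq M}(z_l/v_l)+b_m=:G_m^{+}+b_m$ from~\reff{DefU} to rewrite
\begin{equation*}
F_m(\bpsi^{(n+1)})=\bigl[G_m^{+}-g(u_m^{+})\bigr]+\sum_{j\neq m}(-A_{mj})\bigl(u_j^{+}-\psi_j^{(n+1)}\bigr).
\end{equation*}
Three structural facts then make both summands non-negative: the pointwise upper bound $g\leq G_m^{+}$ coming from the boundedness $v_l B_l(\psi)\in(0,1)$ of the generalized Boltzmann distributions in Theorem~\ref{t:GBD}; the M-matrix-type sign pattern $A_{mj}\leq 0$ for $j\neq m$ of the discrete Laplacian; and the fact that the truncated iterate itself lies in $S_{\psi}$, so $\psi_j^{(n+1)}\leq u_j^{+}$. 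A mirror computation using $\bu^{-}$ and $G_m^{-}=\chi_{m}^{w,\tau}e\min_{l}(z_l/v_l)\leq g(u_m^{-})$ delivers $F_m(\bpsi^{(n+1)})\leq 0$ in the lower-truncated case, which carries the correct sign since $\overline{\psi}_m^{(n+1)}-\psi_m^{(n+1)}<0$ there.

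The main obstacle I anticipate is precisely this sign bookkeeping for Part~(1): one has to combine the two-sided bound on $g$ inherited from the generalized Boltzmann distributions, the sign pattern of the discrete Laplacian, and the box-membership of $\bpsi^{(n+1)}$, and these three facts must align simultaneously in the chosen rearrangement of $F_m$ to make every term manifestly non-negative. Once this algebraic identity is in hand, both parts of the lemma fall out immediately.
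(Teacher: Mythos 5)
Your proposal is correct and follows essentially the same route as the paper: part (2) is the same case analysis using $\bpsi^{*}\in S_{\psi}$, and part (1) uses the identical chain of convexity, the first-order inequality, and the rearrangement of the residual $F_m(\bpsi^{(n+1)})$ via the defining equation of $\bu^{\pm}$, the sign pattern $a_{mj}\leq 0$ for $j\neq m$, the box-membership of the truncated iterate, and the two-sided bound on $g$ from Theorem~\ref{t:GBD}. The only cosmetic difference is that you treat both truncation directions in one componentwise sum while the paper handles the upper and lower truncations in two symmetric passes.
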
 
\begin{proof}
(1) By the Taylor's theorem, we have 
\[
\begin{aligned}
E(\overline{\bpsi}^{(n+1)})  - E(\bpsi^{(n+1)}) = &(\overline{\bpsi}^{(n+1)}- \bpsi^{(n+1)})^T \nabla E( \bpsi^{(n+1)}) \\
&+ \frac12  (\overline{\bpsi}^{(n+1)}- \bpsi^{(n+1)})^T \nabla^2 E (\bxi_1) (\overline{\bpsi}^{(n+1)}- \bpsi^{(n+1)}),
\end{aligned}
\]
where $\bxi_1$ is a vector between $\overline{\bpsi}^{(n+1)}$ and $\bpsi^{(n+1)}$ in a component-wise sense. By the convexity of $E$, we have
\[
E(\overline{\bpsi}^{(n+1)})  - E(\bpsi^{(n+1)}) \geq  (\overline{\bpsi}^{(n+1)}- \bpsi^{(n+1)})^T \nabla E( \bpsi^{(n+1)}). 
\]
We first consider the upper truncation in~\reff{TruncationStep} by $\bu^+$.  Denote $$\mathcal{P} = \left \{m | \overline{\bpsi}^{(n+1)}_{m}  > \bu^+_{m},  m=1, \cdots, N_h^3 \right\},$$ and 
$$\mathcal{N}_{m} = \left \{ j | j= m \pm 1, m\pm N_h, m\pm N_h^2,~ \text{and }  1\leq j \leq N_h^3 \right\} $$ as the set of indices of grid points in $\mathring{\Omega}_h$ neighboring to the grid point with the index $m$. By the upper truncation step in~\reff{TruncationStep}, we obtain 
\begin{equation}\label{Ediff}
(\overline{\bpsi}^{(n+1)}- \bpsi^{(n+1)})^T \nabla E( \bpsi^{(n+1)}) =\sum_{m\in \mathcal{P}} (\overline{\bpsi}^{(n+1)}_{m}  - \bu^+_{m})  r_{ m},
\end{equation}
where the residual
\[
r_{m}=  \sum_{j \in \mathcal{N}_{m} } a_{mj} \bpsi^{(n+1)}_j  + a_{mm} \bpsi^{(n+1)}_{m} - g(\bpsi^{(n+1)}_{ m}) - \bfb_{m},
\]
and $a_{mj}=(A)_{mj}$. 

 We next show that the residual $r_{m}$ is negative. It follows from the definition of $\bu^+$ that 
\begin{equation}\label{Def:u+}
  \sum_{j \in \mathcal{N}_{m} } a_{mj} \bu^{+}_j  + a_{mm} \bu^{+}_{m} -  \bchi_{m} e \max \limits_{1\leq l \leq M} \frac{ z_l }{v_l}  - \bfb_{m}  =0,
\end{equation}
where $\bchi_{m}$ represents the $m$-th component of the vector $\bchi=(\chi^{w, \tau}_{1,1,1}, \chi^{w, \tau}_{2,1,1}, \cdots, \chi^{w, \tau}_{N_h,N_h,N_h})^T$. Subtracting~\reff{Def:u+} from the residual $r_{m}$, we obtain
\[
r_{m}= \sum_{j \in \mathcal{N}_{m} } a_{mj} (\bpsi^{(n+1)}_j - \bu^{+}_j) - \left( g(\bpsi^{(n+1)}_{m}) -  \bchi_{m} e \max \limits_{1\leq l \leq M} \frac{ z_l }{v_l} \right),
\]
where $ \bpsi^{(n+1)}_{m}=\bu^{+}_{m}$, due to the truncation, has been used. By the facts that $a_{mj} <0$ for $m\neq j$, $\bpsi^{(n+1)}_j \leq \bu^{+}_j$, and $g(\bpsi^{(n+1)}_{m}) \leq \bchi_{m} e \max \limits_{1\leq l \leq M} \frac{ z_l }{v_l}$ (by Theorem~\ref{t:GBD}),  we have $r_{m}\geq 0.$ Thus, we have by~\reff{Ediff} that $E(\overline{\bpsi}^{(n+1)}) \geq  E(\bpsi^{(n+1)}).$ With an analogous analysis, we can show that the lower truncation by $\bu^-$ can further decrease the energy. This completes the proof.

(2) Let $\bpsi^{*}$ be the exact solution to the nonlinear system $F(\cdot) = 0$. As $\bpsi_{m}^* \in [\bu^-_{m}, \bu^+_{m}]$, we have 
\[
\overline{\bpsi}_{m}^{(n+1)}-\bpsi_{m}^*\geq \bpsi_{m}^{(n+1)}-\bpsi_{m}^*\geq0,~ \mbox{if } ~ \overline{\bpsi}_{m}^{(n+1)} > \bu^+_{m},
\]
and 
\[
\overline{\bpsi}_{m}^{(n+1)}-\bpsi_{m}^*\leq \bpsi_{m}^{(n+1)}-\bpsi_{m}^*\leq0,~ \mbox{if } ~ \overline{\bpsi}_{m}^{(n+1)} < \bu^-_{m},
\]
for $m=1,2,...,N_h^3.$ Thus, $\left| \overline{\psi}_{m}^{(n+1)}-\psi_{m}^* \right| \geq \left| \psi_{m}^{(n+1)}-\psi_{m}^* \right|.$
This completes the proof.
\end{proof}

After truncation, each iteration step is bounded by upper and lower estimates $\bu^+$ and $\bu^-$. The boundness leads to the following Lipschitz condition, which plays a key role in the algorithm analysis.
\begin{lemma}{\label{Lip}}
The matrix function $G'(\cdot)$ satisfies the Lipschitz condition on the bounded set $S_{\psi}$, i.e. there exists a positive constant $L$ such that
\begin{equation}\label{LIP}
\| G'(\bu)-G'(\bv)\| \leq L \| \bu-\bv\|,~ \forall \bu, \bv\in S_\psi.
\end{equation}
\end{lemma}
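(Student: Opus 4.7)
The plan is to reduce the matrix-valued Lipschitz estimate to a scalar Lipschitz estimate for $g'$, and then invoke the smoothness established in Theorem~\ref{t:GBD}. First, I would observe that $G'(\bpsi)$ is a diagonal matrix whose $m$-th entry depends only on the $m$-th component of $\bpsi$, namely $g'(\bpsi_m) = \chi^{w,\tau}_m \sum_{l=1}^M z_l e B'_l(\psi^f_m + \bpsi_m)$. For any vector norm and its subordinate matrix norm, this diagonal structure gives $\|G'(\bu) - G'(\bv)\| = \max_m |g'(u_m) - g'(v_m)|$, so the task reduces to bounding $|g'(s) - g'(t)|$ in terms of $|s-t|$ on a suitable interval.

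Second, I would identify the relevant bounded interval. For $\bu,\bv \in S_\psi$, each component satisfies $u_m^- \le u_m, v_m \le u_m^+$, and the values $\{\psi^f_m\}_m$ form a finite collection of finite numbers (grid points are chosen to avoid the atomic centers $x_i$ where $\psi^f$ is singular). Hence the arguments $\psi^f_m + u_m$ and $\psi^f_m + v_m$ all lie in the compact interval $I = \bigcup_m [\psi^f_m + u_m^-,\, \psi^f_m + u_m^+] \subset \R$. By Theorem~\ref{t:GBD}, each $B_l$ is smooth on $\R$, so $B''_l$ is continuous and therefore bounded on $I$. Combined with $0 \le \chi^{w,\tau}_m \le 1$ and the mean value theorem applied to $g'$, this yields a constant $L>0$ such that $|g'(s) - g'(t)| \le L|s-t|$ for all $s,t \in I$. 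Assembling the two steps produces the desired bound $\|G'(\bu) - G'(\bv)\| \le L\max_m|u_m - v_m| \le L\|\bu - \bv\|$ on $S_\psi$.

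The main obstacle is a bookkeeping one rather than a conceptual one: the entire analytic content has already been packaged into Theorem~\ref{t:GBD}, and the proof amounts to ``smooth functions are Lipschitz on compact intervals'' applied componentwise through the diagonal structure of $G'$. The only subtle check is that $\psi^f_m$ is finite at every grid point so that $I$ is genuinely compact; once this (standard) assumption on grid placement is acknowledged, no further estimates are required. If a more explicit constant is desired, one can differentiate the closed-form expressions \reff{CPrime}--\reff{GammaPrime} once more and bound the resulting rational expression in $\gamma$ and the $B_l$ using the a priori bounds $\gamma \in (0,1)$ and $v_l B_l \in (0,1)$ together with the boundedness of $\gamma(\psi)$ and $\gamma'(\psi)$ on $I$, but this is not needed for the qualitative statement above.
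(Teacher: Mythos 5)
Your proposal is correct and follows essentially the same route as the paper: the paper's proof is a one-line observation that $G'(\cdot)$ is smooth (via Theorem~\ref{t:GBD}) and hence Lipschitz on the bounded set $S_\psi$, with the constant depending on $\bu^{\pm}$. Your version simply fills in the bookkeeping (diagonal structure, compactness of the relevant interval, finiteness of $\psi^f$ at grid points) that the paper leaves implicit.
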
 

\begin{proof}
It follows from the Theorem~\ref{t:GBD} that $B_l(\psi)$ is a smooth function. Thus, the matrix function $G'(\cdot)$ is also a smooth function by its definition.  Therefore, $G'(\cdot)$ satisfies the Lipschitz condition in the bounded set $S_{\psi}$ with the Lipschitz constant $L$ dependent on $\bu^{\pm}$.
\end{proof}
The stepsize $\omega$ is chosen so that the residual function decreases in the line search. The following Lemma ensures the existence of such a stepsize $\omega$.
\begin{lemma}{\label{Omega}}
There exists a stepsize $\omega \in \left(0, \dfrac{2\|F(\bpsi^{(n)})\|}{L\|F'(\bpsi^{(n)})^{-1}F(\bpsi^{(n)})\|^2} \right],$
such that $\|F(\overline{\bpsi}^{(n+1)})\| \leq \|F(\bpsi^{(n)})\| $, where $L$ is the Lipschitz constant in~\reff{LIP}. 
\end{lemma}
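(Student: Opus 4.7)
The plan is to execute the standard damped-Newton line-search estimate: perform a first-order expansion of $F$ about $\bpsi^{(n)}$, exploit that the Newton direction eliminates the linear term up to a factor $(1-\omega)$, and use the Lipschitz regularity of $F'$ provided by Lemma~\ref{Lip} to control the remainder.

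First I would write, via the fundamental theorem of calculus applied to $t \mapsto F(\bpsi^{(n)} + t\bdelta^{(n)})$,
\[
F(\overline{\bpsi}^{(n+1)}) = F(\bpsi^{(n)}) + F'(\bpsi^{(n)})\bdelta^{(n)} + \int_0^1 \bigl[F'(\bpsi^{(n)} + t\bdelta^{(n)}) - F'(\bpsi^{(n)})\bigr]\bdelta^{(n)}\, dt.
\]
Since $\bdelta^{(n)} = -\omega F'(\bpsi^{(n)})^{-1}F(\bpsi^{(n)})$ by construction, the first two terms combine into $(1-\omega)F(\bpsi^{(n)})$. Restricting to $\omega \in (0,1]$ so that $|1-\omega| = 1-\omega$ and taking norms, the triangle inequality together with the bound $\|F'(\bu) - F'(\bv)\| = \|G'(\bu) - G'(\bv)\| \leq L\|\bu - \bv\|$ inherited from Lemma~\ref{Lip} controls the remainder by $\tfrac{L}{2}\|\bdelta^{(n)}\|^2$, yielding
\[
\|F(\overline{\bpsi}^{(n+1)})\| \leq (1-\omega)\|F(\bpsi^{(n)})\| + \frac{L\omega^2}{2}\|F'(\bpsi^{(n)})^{-1}F(\bpsi^{(n)})\|^2.
\]

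Imposing that the right-hand side be at most $\|F(\bpsi^{(n)})\|$ and cancelling one positive factor of $\omega$ reduces to the scalar inequality $\omega \leq 2\|F(\bpsi^{(n)})\|/(L\|F'(\bpsi^{(n)})^{-1}F(\bpsi^{(n)})\|^2)$, so any sufficiently small $\omega > 0$ within this bound achieves the required decrease. Existence follows immediately whenever $F(\bpsi^{(n)}) \neq 0$, since the right endpoint is then strictly positive; the invertibility of $F'(\bpsi^{(n)})$ required to even form the bound is supplied by Lemma~\ref{FInvertibility}.

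The most delicate point I anticipate is the precise scope of the Lipschitz constant $L$: Lemma~\ref{Lip} only produces a constant valid on $S_\psi$, yet the intermediate points $\bpsi^{(n)} + t\bdelta^{(n)}$ on the untruncated segment may leave $S_\psi$. Since Theorem~\ref{t:GBD} guarantees smoothness of the generalized Boltzmann distributions $B_l$, the matrix $G'$ is locally Lipschitz on any compact enlargement of $S_\psi$, and for $\omega$ taken small enough the entire segment remains inside such a compact neighbourhood by continuity. The algebra beyond that is routine; the only genuine care needed is in specifying the region on which $L$ is understood to be taken.
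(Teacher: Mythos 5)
Your proof is correct and follows essentially the same route as the paper: a first-order expansion of $F$ along the Newton step, cancellation of the linear term into $(1-\omega)F(\bpsi^{(n)})$, the Lipschitz bound from Lemma~\ref{Lip} on the integral remainder, and the resulting quadratic-in-$\omega$ inequality. Your closing remark about the scope of the Lipschitz constant is well taken---the paper applies $L$ along the untruncated segment without comment, even though that segment may exit $S_\psi$, so your fix via a compact enlargement is actually more careful than the published argument.
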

\begin{proof} 
By the Lipschitz condition~\reff{LIP},  we have
\begin{equation}\label{ResEst}
 \begin{split}
\|F(\overline{\bpsi}^{(n+1)})\|&=\|F(\overline{\bpsi}^{(n+1)})-F(\bpsi^{(n)})- F'(\bpsi^{(n)})(\overline{\bpsi}^{(n+1)}-\bpsi^{(n)})+(1-\omega) F(\bpsi^{(n)})\|  \\
&=\|\int_0^1 \left[F'\left(\bpsi^{(n)}+t(\overline{\bpsi}^{(n+1)}-\bpsi^{(n)})\right)-F'(\bpsi^{(n)})\right] (\overline{\bpsi}^{(n+1)}-\bpsi^{(n)})\,\mathrm{d}t +(1-\omega) F(\bpsi^{(n)})\| \\
&\leq \dfrac{L}{2}\|\overline{\bpsi}^{(n+1)}-\bpsi^{(n)}\|^2 +(1-\omega)\| F(\bpsi^{(n)}) \| \\
&= \dfrac{L\omega^2}{2}\|F'(\bpsi^{(n)})^{-1}F(\bpsi^{(n)})\|^2 +(1-\omega) \|F(\bpsi^{(n)}) \|. 
\end{split}
\end{equation}
Thus, if $0 < \omega \leq  \dfrac{2\|F(\bpsi^{(n)})\|}{L\|F'(\bpsi^{(n)})^{-1}F(\bpsi^{(n)})\|^2}$, we have $\|F(\overline{\bpsi}^{(n+1)})\| \leq \|F(\bpsi^{(n)})\| $.
\end{proof}
The local convergence for the proposed Newton iteration method with truncation can be established in the following theorem.

\begin{theorem}\label{thm:local}
Let $\{\bpsi^{(n)} \}$ be the sequence generated by the Algorithm~\ref{alg:tnewtonsor}. Let $\bpsi^*$ be the unique exact solution to the nonlinear equations $F(\cdot)=0$. If there exists some integer $k>0$ such that $\|\bpsi^{(k)}-\bpsi^*\| <\frac{1}{L\theta}$ and $\|F(\bpsi^{(k)})\|  \leq \frac{1}{4 L\theta^2}$, where $\theta =\|F'(\bpsi^*)^{-1}\|$ and $L$ is the Lipschitz constant in~\reff{LIP}; then a stepsize $\omega=1$ meets the backtracking condition for $n\geq k$, and $\{\bpsi^{(n)}\}$ converges to $\bpsi^*$ quadratically. 
\end{theorem}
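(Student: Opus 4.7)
The plan is to prove this by induction on $n \geq k$, treating the two hypotheses $\|\bpsi^{(n)}-\bpsi^*\|<\frac{1}{L\theta}$ and $\|F(\bpsi^{(n)})\|\leq \frac{1}{4L\theta^2}$ as the induction invariant. The standard machinery is the Banach perturbation lemma together with the quadratic Taylor remainder for $F$. Crucially, the truncation step does not interfere with error analysis: by Lemma~\ref{Truncation}(2), $\|\bpsi^{(n+1)}-\bpsi^*\| \leq \|\overline{\bpsi}^{(n+1)}-\bpsi^*\|$, so it suffices to bound the untruncated Newton iterate.

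For the induction step, first I would control $F'(\bpsi^{(n)})^{-1}$. Writing $F'(\bpsi^{(n)})-F'(\bpsi^*) = -(G'(\bpsi^{(n)})-G'(\bpsi^*))$ and applying the Lipschitz estimate from Lemma~\ref{Lip} gives $\|F'(\bpsi^{(n)})-F'(\bpsi^*)\| \leq L\|\bpsi^{(n)}-\bpsi^*\| < 1/\theta$. The Banach perturbation lemma then yields invertibility with $\|F'(\bpsi^{(n)})^{-1}\| \leq \theta/(1-L\theta\|\bpsi^{(n)}-\bpsi^*\|)$; combined with the hypothesis this gives a bound of the form $\|F'(\bpsi^{(n)})^{-1}\| \leq 2\theta$ once the induction has tightened $\|\bpsi^{(n)}-\bpsi^*\|$ below $\frac{1}{2L\theta}$ (the base case $n=k$ only needs the weaker bound). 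Next, I would verify that $\omega=1$ satisfies the backtracking threshold from Lemma~\ref{Omega}: plugging the inverse bound and $\|F(\bpsi^{(n)})\|\leq \frac{1}{4L\theta^2}$ into $\frac{2\|F(\bpsi^{(n)})\|}{L\|F'(\bpsi^{(n)})^{-1}F(\bpsi^{(n)})\|^2}$ shows this quantity is at least $1$, so the full Newton step is admissible.

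With $\omega=1$, standard Newton algebra gives $\overline{\bpsi}^{(n+1)} - \bpsi^* = F'(\bpsi^{(n)})^{-1}\bigl[F'(\bpsi^{(n)})(\bpsi^{(n)}-\bpsi^*) - F(\bpsi^{(n)}) + F(\bpsi^*)\bigr]$, and the integral form of the remainder combined with the Lipschitz bound on $F'$ yields
\begin{equation*}
\|\overline{\bpsi}^{(n+1)} - \bpsi^*\| \leq \frac{L}{2}\|F'(\bpsi^{(n)})^{-1}\|\,\|\bpsi^{(n)}-\bpsi^*\|^2 \leq L\theta\,\|\bpsi^{(n)}-\bpsi^*\|^2.
\end{equation*}
By Lemma~\ref{Truncation}(2), the same bound holds for $\bpsi^{(n+1)}$. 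Since $L\theta\|\bpsi^{(n)}-\bpsi^*\|<1$, this is quadratic convergence, and it also certifies $\|\bpsi^{(n+1)}-\bpsi^*\| < \|\bpsi^{(n)}-\bpsi^*\| < \frac{1}{L\theta}$. To close the induction one also needs $\|F(\bpsi^{(n+1)})\| \leq \frac{1}{4L\theta^2}$; this follows from the residual estimate in~\reff{ResEst} with $\omega=1$, which reduces to $\|F(\overline{\bpsi}^{(n+1)})\| \leq \frac{L}{2}\|F'(\bpsi^{(n)})^{-1}F(\bpsi^{(n)})\|^2$, a quantity that is a factor of at least $\frac{1}{2}$ times $\|F(\bpsi^{(n)})\|$ by the admissibility of $\omega=1$, together with the truncation property that $\|F(\bpsi^{(n+1)})\|$ cannot exceed $\|F(\overline{\bpsi}^{(n+1)})\|$ in the relevant sense (or alternatively a direct Taylor estimate for $F(\bpsi^{(n+1)})$ via the error bound just proved).

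The main obstacle I anticipate is bookkeeping the constants cleanly. The two hypotheses are delicately matched: $\|\bpsi^{(k)}-\bpsi^*\|<\frac{1}{L\theta}$ controls the Banach perturbation argument, while $\|F(\bpsi^{(k)})\|\leq \frac{1}{4L\theta^2}$ is exactly the threshold that lets $\omega=1$ pass the backtracking test. Keeping these two estimates inductively compatible — especially through the truncation step, where one must invoke Lemma~\ref{Truncation}(2) for the error and separately argue that $\|F(\bpsi^{(n+1)})\|$ inherits the decrease — will require some care. Once the invariant is preserved, the quadratic convergence rate is an immediate consequence of the estimate $\|\bpsi^{(n+1)}-\bpsi^*\| \leq L\theta\|\bpsi^{(n)}-\bpsi^*\|^2$.
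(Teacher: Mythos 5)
Your overall route coincides with the paper's: the Banach perturbation lemma for the Jacobian inverse, the Lipschitz bound of Lemma~\ref{Lip} in the integral form of the Newton remainder, Lemma~\ref{Omega} to certify $\omega=1$, and Lemma~\ref{Truncation}(2) to pass the error estimate through the truncation. There is, however, one concrete gap, located exactly at the point you wave off with ``the base case $n=k$ only needs the weaker bound.'' To show that $\omega=1$ passes the backtracking test at $n=k$ you must verify $\|F'(\bpsi^{(k)})^{-1}F(\bpsi^{(k)})\|^2 \le \tfrac{2}{L}\|F(\bpsi^{(k)})\|$, and with $\|F(\bpsi^{(k)})\|\le \tfrac{1}{4L\theta^2}$ this forces an inverse bound of the form $\|F'(\bpsi^{(k)})^{-1}\|\le 2\sqrt{2}\,\theta$. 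The hypothesis $\|\bpsi^{(k)}-\bpsi^*\|<\tfrac{1}{L\theta}$ alone only yields $\|F'(\bpsi^{(k)})^{-1}\|\le \theta/(1-L\theta\|\bpsi^{(k)}-\bpsi^*\|)$, which blows up as $\|\bpsi^{(k)}-\bpsi^*\|$ approaches $\tfrac{1}{L\theta}$, so the weaker bound does \emph{not} suffice at the base case. The missing ingredient is the paper's lower bound on the residual,
\[
\|F(\bpsi^{(k)})\|\;\ge\;\frac{1}{\theta}\|\bpsi^{(k)}-\bpsi^*\|-\frac{L}{2}\|\bpsi^{(k)}-\bpsi^*\|^2\;\ge\;\frac{1}{2\theta}\|\bpsi^{(k)}-\bpsi^*\|,
\]
which converts the hypothesis on $\|F(\bpsi^{(k)})\|$ into the refined error estimate $\|\bpsi^{(k)}-\bpsi^*\|\le 2\theta\|F(\bpsi^{(k)})\|\le \tfrac{1}{2L\theta}$ (the paper's \reff{RefinedError}); only then does \reff{InvEst} deliver $\|F'(\bpsi^{(k)})^{-1}\|\le 2\theta$ and the admissibility of $\omega=1$. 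Add this step and everything downstream in your argument goes through.

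A secondary remark on your induction invariant: you need $\|F(\bpsi^{(n+1)})\|\le\tfrac{1}{4L\theta^2}$ to survive the truncation, and as you yourself note, Lemma~\ref{Truncation} controls the energy and the error but not the residual norm, so ``$\|F(\bpsi^{(n+1)})\|$ cannot exceed $\|F(\overline{\bpsi}^{(n+1)})\|$'' is not available. Your fallback Taylor estimate for $F(\bpsi^{(n+1)})$ introduces $\|F'(\bpsi^*)\|$, which is not controlled by $\theta=\|F'(\bpsi^*)^{-1}\|$, so that route does not close cleanly either. The cleaner fix is to carry only the error invariant $\|\bpsi^{(n)}-\bpsi^*\|\le\tfrac{1}{2L\theta}$ (preserved by the quadratic contraction and Lemma~\ref{Truncation}(2)) and to check the backtracking threshold via $\|F'(\bpsi^{(n)})^{-1}F(\bpsi^{(n)})\|\le\|\bpsi^{(n)}-\bpsi^*\|+\|\overline{\bpsi}^{(n+1)}-\bpsi^*\|\le 2\|\bpsi^{(n)}-\bpsi^*\|$ together with $\|F'(\bpsi^{(n)})^{-1}\|\le 2\theta$, which gives $\|F'(\bpsi^{(n)})^{-1}F(\bpsi^{(n)})\|^2\le \tfrac{2}{L}\|F(\bpsi^{(n)})\|$ without ever invoking a residual bound at the truncated iterate.
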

\begin{proof}
It follows from $\|\bpsi^{(k)}-\bpsi^*\| <\frac{1}{L\theta}$, 
the Lipschitz condition on $G'$ (Lemma \ref{Lip}), and the Banach Lemma~\cite{ortega1970iterative} that 
\begin{equation}\label{InvEst}
\|F'(\bpsi^{(k)})^{-1}\|\leq \frac {\|F'(\bpsi^*)^{-1}\|}{1-\|I-F'(\bpsi^*)^{-1}F'(\bpsi^{(k)})\|}< \frac{\theta}{1-L\theta \|\bpsi^{(k)}-\bpsi^*\|}.
\end{equation}
Also, we have 
\[
\begin{split}
\|F(\bpsi^{(k)})\| &\geq \|F'(\bpsi^*)(\bpsi^{(k)}-\bpsi^*)\|-\|F(\bpsi^{(k)})-F(\bpsi^{*})-F'(\bpsi^*)(\bpsi^{(k)}-\bpsi^*)\| \\
&\geq \frac{1}{\|F'(\bpsi^*)^{-1}\|}\|\bpsi^{(k)}-\bpsi^*\|-\frac{L}{2}\|\bpsi^{(k)}-\bpsi^*\|^2 \geq  \frac{1}{2\theta}\|\bpsi^{(k)}-\bpsi^*\|.
\end{split}
\]
Therefore, we have 
\begin{equation}\label{RefinedError}
\|\bpsi^{(k)}-\bpsi^*\|\leq 2\theta\|F(\bpsi^{(k)})\| \leq\frac{1}{2L\theta}.
\end{equation} 
A combination of~\reff{RefinedError} with~\reff{InvEst} leads to  
\[
\begin{split}
\|F'(\bpsi^{(k)})^{-1} F(\bpsi^{(k)})\|^2 \leq \|F'(\bpsi^{(k)})^{-1}\|^2 \| F(\bpsi^{(k)})\|^2 \leq 4\theta^2 \cdot \frac{1}{4 L\theta^2} \| F(\bpsi^{(k)})\|  \leq \frac{1}{L}\| F(\bpsi^{(k)})\|.
\end{split}
\]
Therefore, it follows from Lemma \ref{Omega} that a stepsize $\omega=1$ satisfies the backtracking condition, i.e. $\|F(\overline{\bpsi}^{(k+1)})\| \leq  \|F(\bpsi^{(k)})\|$.

Recalling that $\overline{\bpsi}^{(k+1)}=\bpsi^{(k)}-\omega F'(\bpsi^{(k)})^{-1} F(\bpsi^{(k)})$ with $\omega=1$,
we have
\[
F'(\bpsi^{(k)})(\overline{\bpsi}^{(k+1)}-\bpsi^*) =\int_0^1  \big[F'(\bpsi^{(k)})-F'(\bpsi^*+t(\bpsi^{(k)}-\bpsi^*))\big] (\bpsi^{(k)}-\bpsi^*) \,\mathrm{d}t.
\]
Then, we have
\[
 \begin{aligned}
\|\bpsi^{(k+1)}-\bpsi^*\|&\leq \|\overline{\bpsi}^{(k+1)}-\bpsi^*\|
\leq \bigg(\|F'(\bpsi^{(k)})^{-1}\| \int_0^1(1-t) L \,\mathrm{d}t\bigg)\|\bpsi^{(k)}-\bpsi^*\|^2 \\
&=\frac{ L}{2}\|F'(\bpsi^{(k)})^{-1}\| \|\bpsi^{(k)}-\bpsi^*\|^2 \\
&\leq \frac{L\theta }{2(1-L\theta \|\bpsi^{(k)}-\bpsi^*\|)}  \|\bpsi^{(k)}-\bpsi^*\|^2 \\
&\leq L\theta \|\bpsi^{(k)}-\bpsi^*\|^2,
\end{aligned}
\]
where we have used~\reff{RefinedError} in the last step. Therefore, the sequence $\{\bpsi^{(n)} \}$ generated by the Newton iterative method with truncation is quadratically convergent. 
\end{proof}

\section{Numerical Results}
\label{s:NumRes}

Numerical simulations are performed to demonstrate the accuracy and efficiency of the proposed Newton iteration method with truncation. The calculations were done using a server with Intel(R) Xeon(R) CPU E5-2640 v4 at 2.40GHZ and 94 GB RAM. Ionic steric effects on electrostatic interactions and counterion stratification in biomolecular solvation systems are investigated by the proposed method as well. In our computations, the Yukawa potential that approximates the electrostatic potential on the boundary is used as the boundary condition~\cite{VISMPB_JCTC2014}. To improve accuracy, a modified Debye screening length due to ionic steric effects could be used in the Yukawa potential.  A more accurate treatment of boundary conditions can be derived by considering contributions from the outside of the computational box based on the linearized steric PB equation~\cite{BOSCHITSCHFENLEY_JCC07}.  The linear system in each Newton iteration step is solved with the algebraic multigrid method.  Unless otherwise specified, biomolecules are solvated in a binary monovalent ionic solution and the following parameters are used in the computations: $\ve_{\rm m}=1$, $\ve_{\rm w}=78$, $\tau = 1.5$ \AA, and $c_1^\infty= c_2^\infty= 0.1$ M. 


\subsection{Tests}
We consider that a charged macroion of radius $R$, carrying a fixed charge $Q=-5 e$ at the origin, is solvated in a binary monovalent electrolyte solution. We take $v_0=2.75^3$ \AA$^3$, $v_1=2.76^3$ \AA$^3$, $v_2=3.62^3$ \AA$^3$, $L=10$ \AA, and $R=5$ \AA~ in the computations. To test the accuracy of the numerical method, we introduce an extra source term in the equation for $\psi^r$ so that the equation has a known exact solution $\psi_{\rm ex}^r(x,y,z)=1000\exp(-\frac{x^2+y^2+z^2}{L^2})$, which is in turn used to determine boundary conditions and the extra source term. With the exact solution, we compute the relative $l^\infty$ error of numerical solutions on meshes with various resolutions and compute numerical convergence order. In computations, the Newton iterations with truncation start from a  zero initial guess and stop as the residual in $l^\infty$ norm becomes less than $tol=1$E$-6$.
\begin{table}[htbp]	
	\centering
	\caption{The relative $l^\infty$ error, convergence order, and iteration steps on meshes with various grid spacing $h$.}
	\begin{tabular}{cccc}
		\hline \hline 
		Grid spacing $h$ & Error & Order & Steps \\
		\hline 
		
		$0.4$   & 0.0348  & -- 
		
		& 6 \\

		$0.2$   & 0.0108   &  1.71
		
		& 6 \\
		
		$0.1$	 & 0.0030   &  1.87
		
		& 6 \\

		$0.05$   & 0.0008    & 1.95 
		
		& 6 \\
		\hline \hline
	\end{tabular}
	\label{OrderOfConver}
\end{table}
\begin{table}[htbp]
	\centering
	\caption{Computational time cost (in seconds) and difference between numerical solutions in $l^\infty$ norm for computations with/without the precomputing-interpolation (P-I) strategy.}
	\begin{tabular}{ccccccc}
		\hline \hline
		Grid spacing $h$  & Without P-I& With P-I & Difference\\
		\hline
		
		$0.4$  & 13.03 &  5.05  & 2.0E-10   \\
		
		$0.2$  & 107.96  & 55.65 & 2.0E-10   \\
		
		$0.1$  & 967.95 &  602.12 & 2.0E-10   \\
		
		$0.05$  & 9380.97 &  6797.67 & 2.0E-10   \\ 
		
		\hline \hline 
	\end{tabular}
	\label{CPUcomparison}
\end{table}

Table~\ref{OrderOfConver} displays the relative $l^\infty$ error and convergence order of numerical solutions on meshes with various resolutions. The error decreases robustly as the mesh refines and convergence rate gradually approaches second order for refined meshes, being consistent with our second-order discretization. As shown in the table, the Newton iterations meet the stopping criterion within 6 steps for various mesh resolutions. Such results demonstrate that the proposed Newton iteration method with truncation can solve the PB theory with steric effects robustly and efficiently.

In addition, we perform numerical simulations to assess the effect of the proposed precomputing-interpolation strategy on computational time cost and final numerical solutions. As shown in Table~\ref{CPUcomparison}, the precomputing-interpolation strategy can effectively save roughly half of the computational time cost, while only causing about $2$E$-10$ difference in final numerical solutions. It is expected that the precomputing-interpolation speed-up strategy becomes more advantageous when the number of ionic species is larger.


%
%
%

\subsection{Biomolecular Solvation} 

\begin{figure}[htbp] 
	\begin{center}
		\includegraphics[scale=0.6]{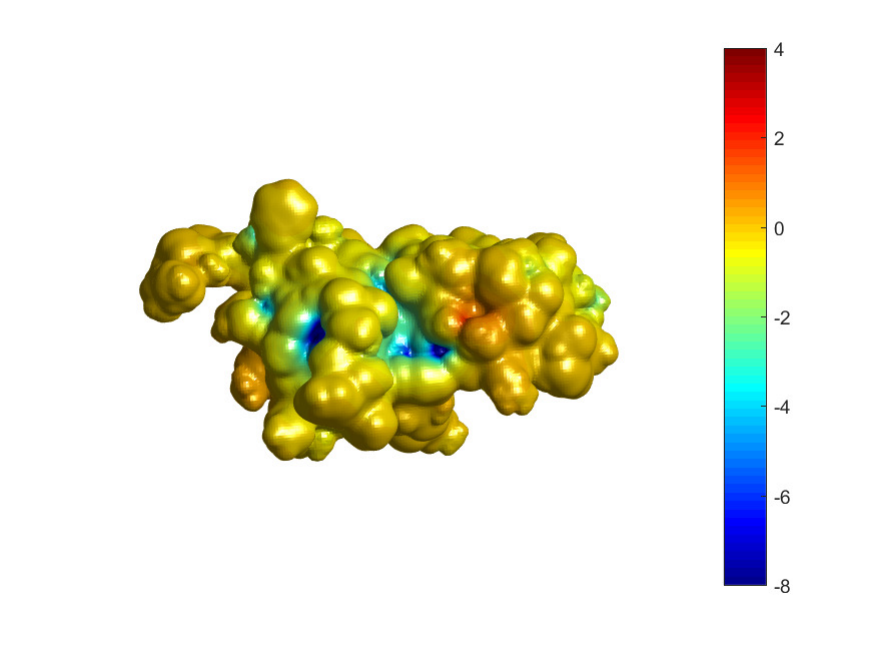}  
	\end{center} 
\vspace{-8mm}
	\caption{The difference of electrostatic potentials ($k_BT/e$) described by the steric PB and classical PB theories on the surface of the protein 1A63 computed on a mesh of grid size $200^3$.}  
	\label{f:1A63DiffPotential}
\end{figure} 
The proposed numerical method is applied to characterize the solvation of realistic biomolecules, whose atomic information, such as coordinates and partial charges, can be obtained by using the online software PDB2PQR with a Protein Data Bank (PDB) ID~\cite{DNM+:NAR:2004}. The molecular surfaces of biomolecules are calculated by a software based on a variational implicit-solvent model (VISM)~\cite{VISMPackage_JCC15}, which provides the level-set function of a molecular surface obtained by minimizing the solvation free energy.  We consider the solvation of protein molecules in a binary monovalent ionic solution with PBD ID: 1A63 and 2AID, which have been studied with a linearized PB theory without steric effects~\cite{ZhongRenTsai_JCP2018, BajajChenRand_SISC2011, GK:JCP:2013}. We take $v_0=2.75^3$ \AA$^3$, $v_1=5.51^3$ \AA$^3$, and $v_2=6.37^3$ \AA$^3$ in numerical simulations. 

For the protein 1A63, we apply the Newton iteration method with truncation to solve both the steric PB theory (sPB) and classical PB theory (cPB)  on a mesh of grid size $200^3$. The iterations take the Yukawa potential as the initial guess. The iterations for the cPB case converge in $9$ steps taking 795.92 seconds with the stepsize $\omega$ determined by the backtracking strategy, while that for the sPB case converge robustly in 4 steps taking 347.85 seconds with $\omega=1$ throughout the iterations. The computational time cost shows that the proposed Newton iteration method with truncation for the sPB theory is as efficient as that for the cPB theory in each iteration. Fig.~\ref{f:1A63DiffPotential} depicts the difference of the electrostatic potentials described by the sPB and cPB theories on the molecular surface. It is found that the electrostatic potential is stronger on the surface when it is described by the sPB theory, especially inside the groove. This is ascribed to the fact that less counterions distribute next to the charged molecules due to steric hindrance, agreeing with the existing studies~\cite{BAO_PRL97, LiLiuXuZhou_Nonlinearity13}. In contrast, the electrostatic potential predicted by the cPB theory is much screened by more closely attracted counterions and therefore becomes relatively weak on the molecule surface.

\begin{table}[htbp]
	\centering
	\caption{The reaction field energy ($k_BT$), and range of electrostatic potential ($k_BT/e$) and maximum concentrations (M) on the surface of the protein 1A63, calculated by the classical PB (cPB) and steric PB (sPB) theories. }  
	\begin{tabular}{ccccccc}
		\hline \hline
		&\multicolumn{2}{c}{$c_1^{\infty}=c_2^{\infty}=0.01 M$} &\multicolumn{2}{c}{ $c_1^{\infty}=c_2^{\infty}=0.05 M$} &\multicolumn{2}{c}{$c_1^{\infty}=c_2^{\infty}=0.1 M$ }  \\ 
		\cline{2-3} \cline{4-5} \cline{6-7}
		&cPB &  sPB  &cPB &  sPB  &cPB &  sPB \\
		\hline 
		Reaction field energy & -10287.15   &  -10274.48    & -10296.42   & -10278.97   & -10301.69   & -10281.69    \\
		
		Potential max.  & 10.19   &  11.24   & 9.12  &11.07  &8.60  & 10.97  \\ 
		
		Potential min.   & -15.98  &-37.05   &-14.50 &-36.76    &-13.87  &-36.65  \\  
		
		Cation $c_1$ max.  & 9.34E+4 &9.68  &1.22E+5 &9.72  &1.39E+5   &9.74    \\ 
		
		Anion $c_2$ max.  & 183.14  & 2.39 
		& 356.79  &  2.82 & 444.04   & 3.01   \\ 
		\hline \hline 
	\end{tabular}
	\label{T:1A63}
\end{table}
To further unravel the difference between steric PB and classical PB theories, we perform numerical simulations with various bulk concentrations and quantitatively compare the electrostatic potential and maximum concentrations on the surface, as well as the reaction field energy that is defined by $\frac{1}{2}\sum_{i=1}^N Q_i\psi^r(x_i)$. As listed in Table~\ref{T:1A63}, the reaction field energies predicted by the cPB theory are more negative than that of the sPB theory. This can be explained by the fact that electrostatic interactions between ions and biomolecules are weaker with the presence of steric hindrance.  As the bulk concentration increases, the reaction field energies become more negative and the discrepancy between cPB and sPB enhances as well. In contrast to a mild difference shown in Fig.~\ref{f:1A63DiffPotential}, data listed in the table demonstrate that the electrostatic potential on the surface actually has a large difference between cPB and sPB theories. As the bulk concentration increases, the range of the potential narrows in that more screening effect comes from mobile ions. In addition, the maximum ionic concentrations on the surface illustrate that the cPB theory predicts unphysically high counterion concentrations, evidencing the necessity of inclusion of ionic steric effects in mean-field electrostatic modeling.


\begin{figure}[htbp]
	\centering
	\includegraphics[scale=0.6]{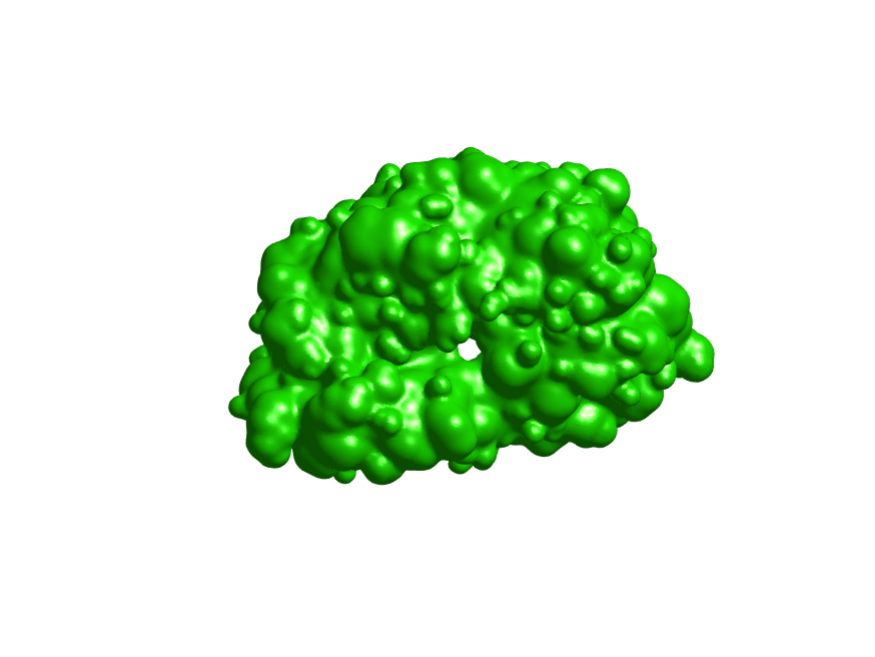}\hspace{5mm}
	\includegraphics[scale=0.6]{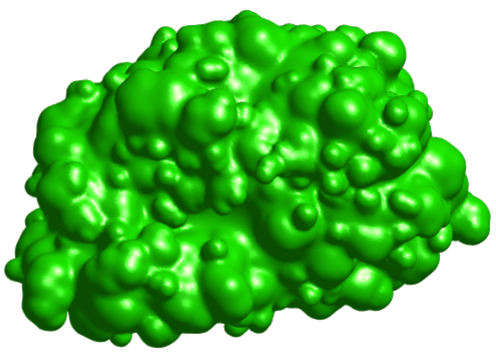}
	\caption{Molecule surfaces for two solvation states of the protein 2AID. Left: A wet state with a cavity in the center. Right: A dry state without a cavity.} 
	\label{f:2AIDSurf}
\end{figure}
\begin{figure}[htbp]
	\centering
	\includegraphics[scale=0.5]{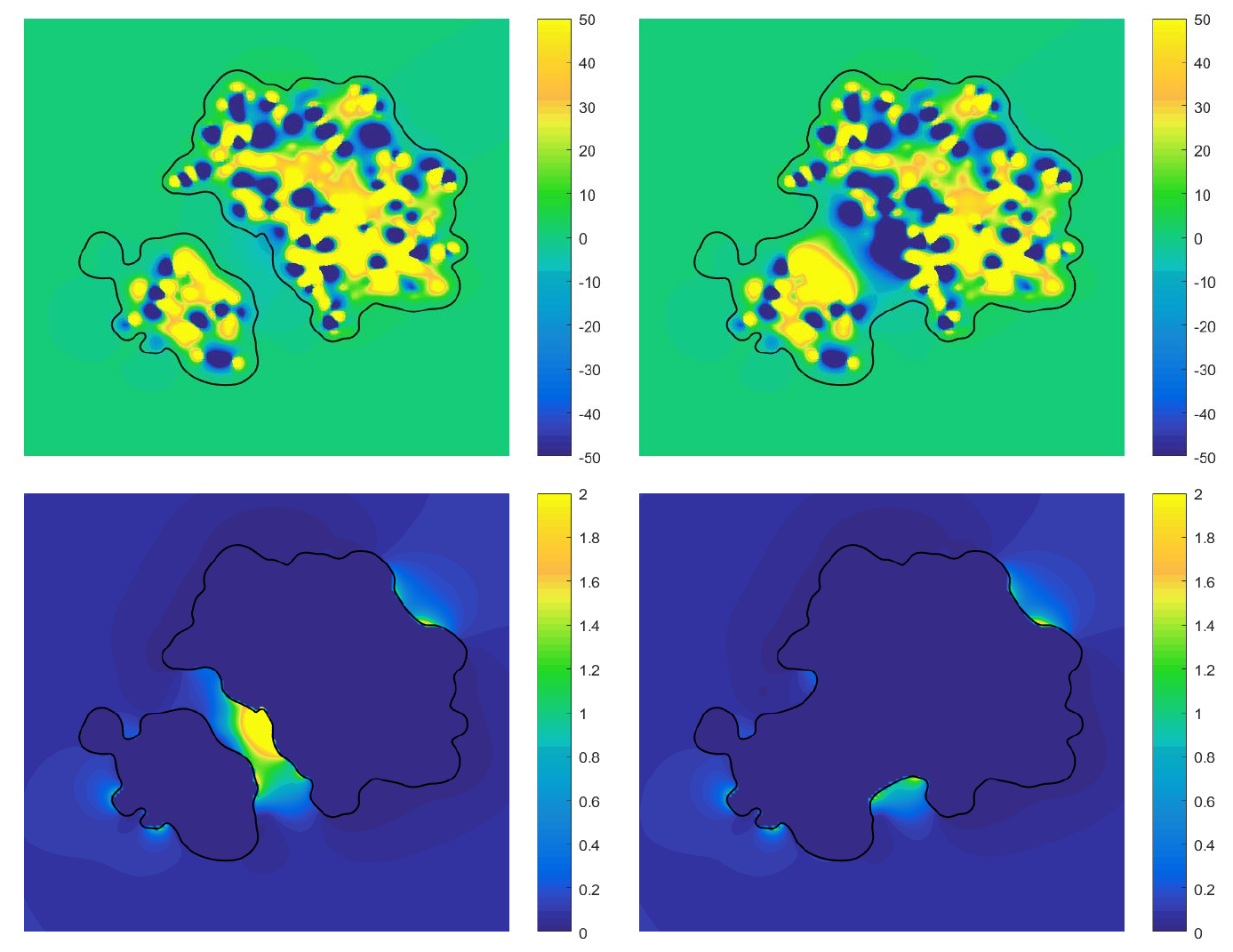}  
	\caption{The electrostatic potential ($k_BT/e$) (Upper row) and cation concentrations (M) (Lower row) on a cross-section plane of the protein 2AID in a wet state (Left column) and a dry state (Right column). Black curves depict the molecular surfaces on the cross-section plane. } 
	\label{f:2AID}
\end{figure}
We also consider the solvation of the protein 2AID, which is a non-peptide inhibitor complexed with the HIV-1 protease~\cite{ZhongRenTsai_JCP2018}. The molecular surfaces are calculated by a software based on a variational implicit-solvent model (VISM), in which the solvation free energy is minimized with respect to all possible surfaces~\cite{VISMPackage_JCC15}. Starting from different initial guesses, e.g., a tight and a loose initial wrap, the level-set relaxation of a non-convex solvation free-energy functional in VISM may lead to different molecular surfaces, corresponding to multiple solvation states of biomolecules. Wetting and dewetting transitions between different solvation states are often observed in molecular dynamics simulations~\cite{Zhou_PNAS19, ZhouZhangChengLi_JCP21}. For this case, the VISM calculations with different initial guesses predict two solvation states: a wet state where the central cavity is hydrated, and a dry state where the central cavity is dehydrated.  The left plot of Fig.~\ref{f:2AIDSurf} illustrates the wet state with a wet cavity in the center, but water molecules cannot penetrate into the cavity in the dry state as shown in the right plot. It is of particular  interest to investigate the impact of different molecule surfaces on the distributions of the electrostatic potential and counterions. 

Numerical simulations on the protein 2AID are performed on a mesh with grid spacing about $0.37$ \AA. The Newton iterations converge robustly and efficiently in 4 steps. Fig.~\ref{f:2AID} presents plots of the electrostatic potential and cation concentrations on a cross-section plane of the protein in wet and dry states. It is of interest to observe that the molecular surface in the wet state has two separate parts on the cross-section plane. Clearly, one can find that the electrostatic potential inside the cavity is much weaker in the wet state, because the screening effect is much stronger with a higher dielectric coefficient ($\ve_{\rm w} $~vs.~$ \ve_{\rm m}$) and attracted counterions. In contrast, there is a strong electric field in the same region for the dry state. Furthermore, it is depicted that, due to electrostatic interactions, cations penetrate into the cavity in the wet state and form electric double layers next to charged atoms in the protein.  
\begin{figure}[htbp]
	\centering
	\includegraphics[scale=0.55]{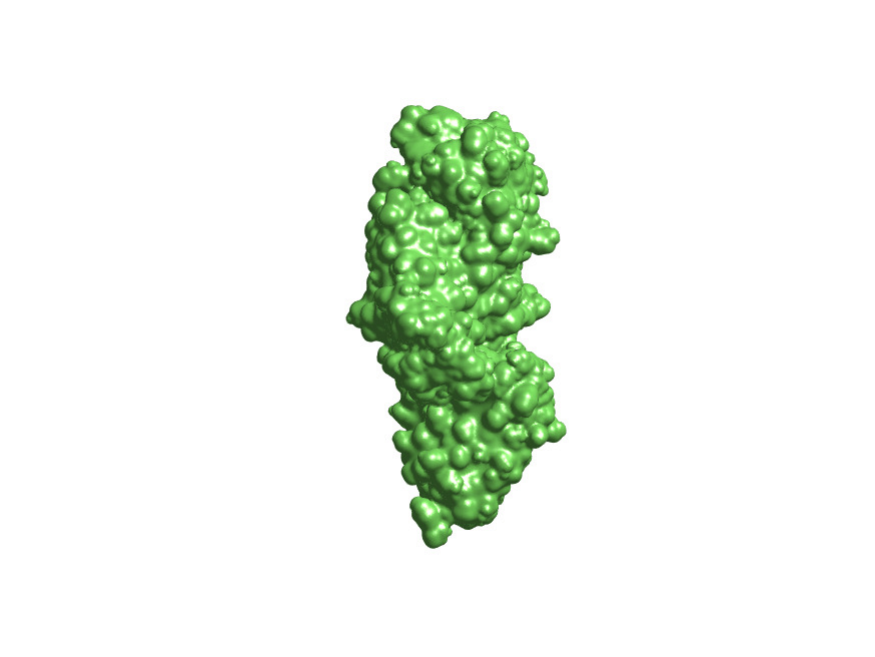}\hspace{-15mm}
	\includegraphics[scale=0.55]{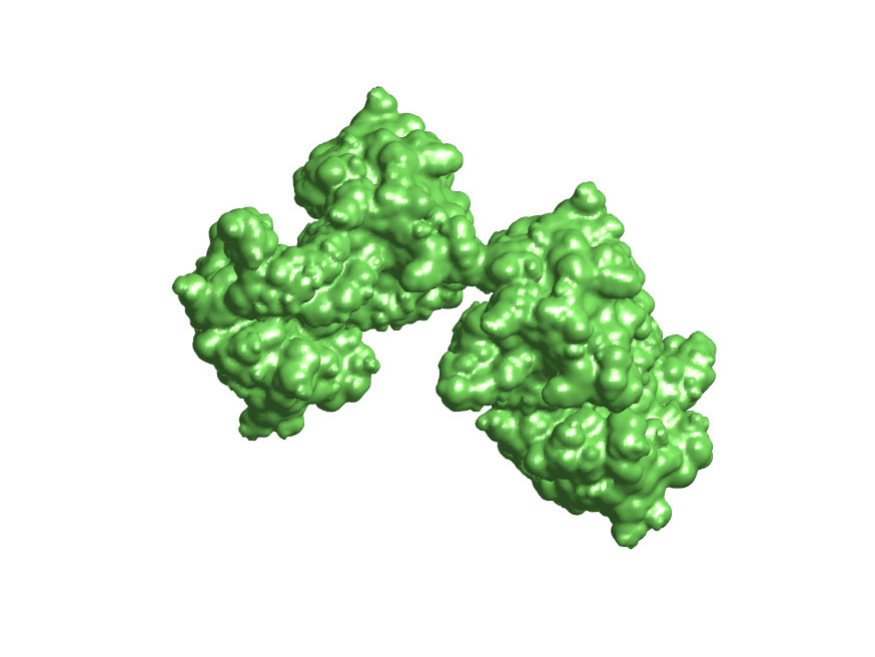}
	\vspace{-8mm}
	\caption{Molecule surfaces of the G-actin 6RSW and B-DNA 7OGS on a mesh of grid size $200^3$.} 
	\label{f:MoleSurf}
\end{figure}

\begin{table}[htbp]
	\caption{The computational time cost (in seconds), iteration steps, total net charge ($e$), atom number, and computational box size ($\AA^3$) of various molecules on a mesh of grid size $200^3$.}
	\begin{center}
		\begin{tabular}{c| c c c c c c}
			\hline
			\hline
			&    & CPU time & Steps &  Net charge & Atoms & Box size \\
			\hline
			Portein & 1A63 & 347.85 & 4 &  -1.00 & 2065 & $37.73^3$ \\
			Portein & 2AID  &  338.08  & 4 & 8.00  & 3445 & $37.39^3$ \\
			\hline
			Virus & 1F15  &  370.31  & 4 & 12.00  & 8494 & $46.23^3$ \\
			G-actin & 6RSW  &  359.59  & 4 & -14.00  & 10811 & $53.70^3$ \\
			G-actin & 6GVC  &  326.84  & 4 & -18.34  & 37447 & $56.65^3$ \\
			B DNA & 7OGS  &  350.89  & 4 & -68.00  & 10078 & $65.30^3$ \\
			B DNA & 7Q0N  &  342.34  & 4 & -100.00 & 15551 & $67.12^3$ \\
			\hline
			\hline
		\end{tabular}
	\end{center}
	\label{SeveralMoleCPU}
\end{table}

To further assess the performance of our numerical approach,  we perform additional simulations on several biomolecules that have large sizes, different topology, and high net charges. For instance, as shown in Fig.~\ref{f:MoleSurf},  we study the G-actin protein 6RSW that has a long structure, and B-DNA 7OGS that has two connected regions. Table~\ref{SeveralMoleCPU} demonstrates that the proposed numerical method is still efficient for biomolecular systems with high net charges and large sizes, compared with previously studied proteins 1A63 and 2AID. For instance, the protein 6GVC has $37447$ atoms and B-DNA 7Q0N has total net charge $-100\,e$. The Newton iterations for these systems converge within $4$ steps taking $5$-$6$ minutes.

\subsection{Counterion Stratification} 
\begin{figure}[htbp] 
	\begin{center}
		\includegraphics[scale=0.9]{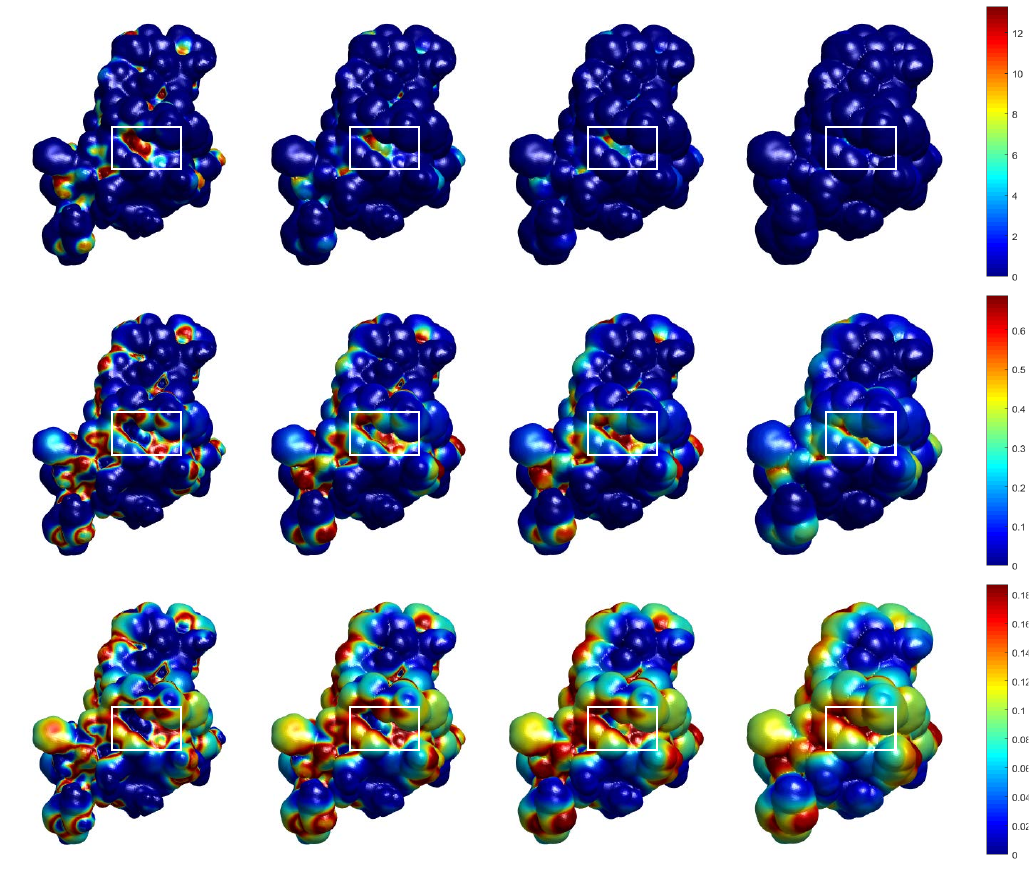}  
	\end{center}
	\caption{Upper ($+3$), middle ($+2$), and lower ($+1$) rows present concentrations next to the protein 1A63 for counterions with ratios $\alpha_{+3}:\alpha_{+2}:\alpha_{+1}=3:2:1$. The first to the fourth column correspond to iso-surfaces with level-set values $0.001$, $0.5$, $0.8$, and $1.5$ \AA, respectively.} 
	\label{f:1A63ConStrst1}
\end{figure}  
Counterions stratify near highly charged surfaces, resulting from the competition between the entropy effect and electrostatic interactions. An ionic valence-to-volume ratio parameter, $\alpha_i=\frac{|z_i|}{v_i}$, was first proposed to describe the order of counterion stratification: the counterion species with larger valence-to-volume ratio can distribute closer to the charged surface~\cite{ZhouWangLi_PRE11}. The role of the parameter later was further confirmed by asymptotic analysis~\cite{LiLiuXuZhou_Nonlinearity13} and Monte Carlo simulations~\cite{WenZhouXuLi_MC_PRE12}. However, the numerical simulations and analysis in these works were performed in a simple spherical geometry. It is of significance to consider counterion stratification next to realistic biomolecular surfaces. 
\begin{figure}[htbp] 
	\begin{center}
		\includegraphics[scale=0.9]{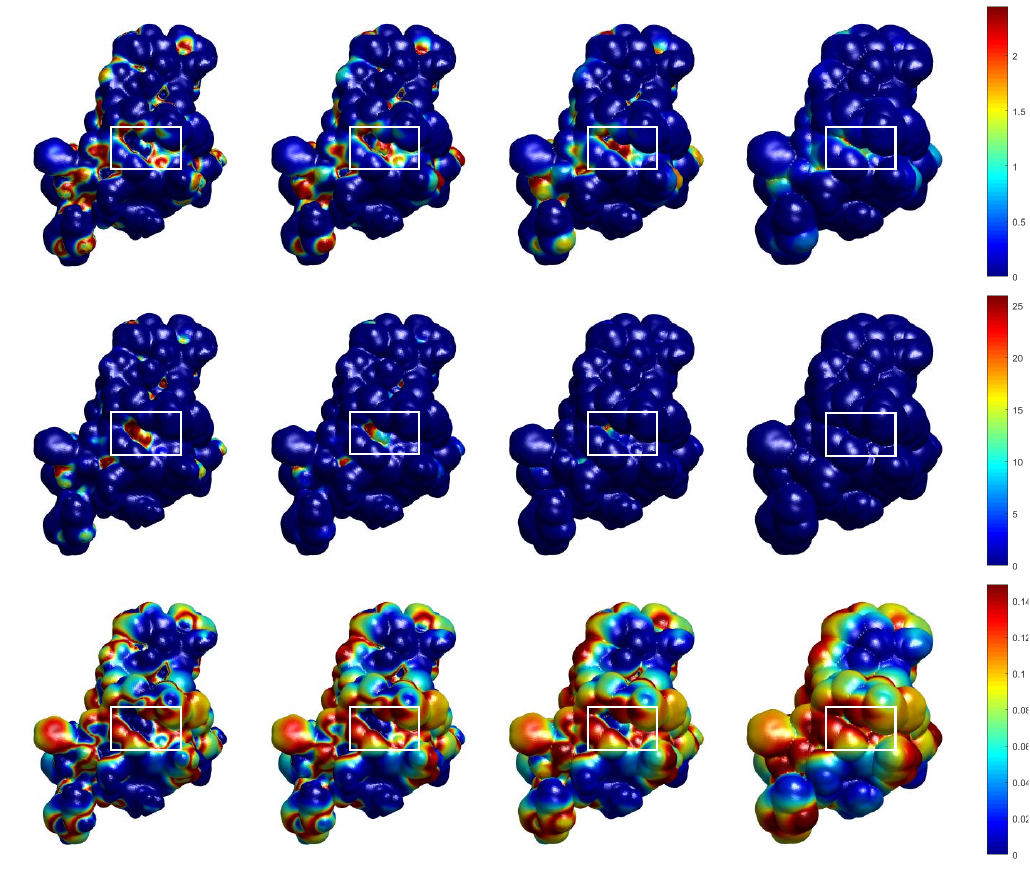}  
	\end{center}
	\caption{Upper ($+3$), middle ($+2$), and lower ($+1$) rows present concentrations next to the protein 1A63 for counterions with ratios $\alpha_{+3}:\alpha_{+2}:\alpha_{+1}=3:6.475:1$. The first to the fourth column correspond to iso-surfaces with level-set values $0.001$, $0.3$, $0.7$, $1.5$ \AA, respectively.}  
	\label{f:1A63ConStrst2}
\end{figure} 
In computations, we consider an ionic solution that consists of four species with $z_1=+1$, $z_2=+2$, $z_3=+3$, $z_4=-1$, $c_1^\infty=c_2^\infty=c_3^\infty=0.1$ M, and $c_4^\infty=0.6$ M.  To verify the role of $\alpha_i$ in realistic biomolecular systems, we take two groups of ionic volumes:
	\[
	\begin{aligned}
	&\mbox{Group I: } v_0=3^3 {\rm \AA}^3, v_1=v_2=v_3=5^3 {\rm \AA}^3,  v_4=6^3 {\rm \AA}^3, \mbox{ and } \alpha_{+3}:\alpha_{+2}:\alpha_{+1}=3:2:1; \\
	&\mbox{Group II: } v_0=3^3 {\rm \AA}^3, v_1=v_3=v_4=6^3 {\rm \AA}^3, v_2=4^3 {\rm \AA}^3, \mbox{ and } \alpha_{+3}:\alpha_{+2}:\alpha_{+1}=3:6.475:1.
	\end{aligned}
	\]
Fig.~\ref{f:1A63ConStrst1} presents the counterion concentration next to the protein 1A63 with ionic volumes given in Group I. The ionic concentrations are plotted on iso-surfaces with increasing level-set values in columns. Note that higher level-set values mean farther distances to the molecule surface, which is the iso-surface with the zero level-set value. It is of interest to observe that  three species of counterions stratify in certain highly charged regions, e.g., the regions highlighted by boxes. From the figure, one observes that the trivalent ions distribute closest to the negatively charged surface and the concentration quickly decreases as the distance increases. The peak value is close to its saturation concentration, i.e., $\frac{1}{v_{3}}$.  For divalent ions, the concentration increases quickly first,  reaches its peak at the distance about $0.8$ \AA, and decreases for larger distances. For monovalent ions, the concentration increases from almost zero to its saturation concentration at the distance about $1.5$ \AA. Overall, one can find that three species of counterions stratify clearly into three layers with the order prescribed by the value of $\alpha_i$ ($\alpha_{+3}:\alpha_{+2}:\alpha_{+1}=3:2:1$). 

The role of $\alpha_i$ is further confirmed by the simulations with ionic volumes given by Group II, which has $\alpha_{+3}:\alpha_{+2}:\alpha_{+1}=3:6.475:1$. From Fig.~\ref{f:1A63ConStrst2}, one can see that the divalent ions, which have the largest $\alpha_i$ value, take the lead to be attracted to the negatively charged regions. As the divalent concentration decreases, the concentration of monovalent ions that have the second largest $\alpha_i$ value increases to its peak value at the distance about $0.7$ \AA. With the smallest $\alpha_i$ value, the monovalent ions distribute in the third layer and increases to its peak value after the concentration of divalent and trivalent ions both decrease. Such a layering order again confirms the role of the  parameter. Therefore, the valence-to-volume ratio parameter still works well for realistic, complicated geometry and can have great potential in predicting counterion layering structures in realistic biomolecular systems under physiological conditions.

\section{Conclusions}
\label{s:Conclusions}
Local steric Poisson--Boltzmann (PB) theories have been widely applied to describe ionic size effects in ionic solutions. This work has proposed a fast Newton iteration method with truncation to solve local steric PB theories. To present the method, we have focused on one local steric PB theory that is derived from a lattice-gas theory.  A crucial idea that generalized Boltzmann distributions are numerically available accounts for the advantages achieved by the proposed method in efficiency and memory saving. The existence, uniqueness, boundness, and smoothness of the generalized Boltzmann distributions have been rigorously established. Also, the existence and uniqueness of the solution to the nonlinear system discretized from the generalized PB equation have been established by showing that it is a unique minimizer of a constructed convex energy. By the extremum principle, the upper and lower bounds for the solution have been obtained with the boundness on ionic concentrations. Detailed analysis has revealed that the truncation step in Newton iterations further decreases the energy and error. To further speed-up computations, we have also proposed a novel precomputing-interpolation strategy, which is applicable to other local steric PB theories and makes the proposed methods for solving steric PB theories as efficient as for solving the classical PB theory.  Further analysis has proved local quadratic convergence rate for the proposed Newton iteration method with truncation. 

Numerical simulations have demonstrated the effectiveness and high efficiency of the proposed method. Applications to realistic biomolecular solvation systems have illustrated that the proposed numerical method for the steric PB theories can effectively capture steric effects in biomolecular solvation and have great potential in predicting counterion stratification next to large-scale charged biomolecules in physiological conditions. Finally, we highlight that the proposed numerical method for local steric PB theories can be readily incorporated in the well-known classical PB solvers to consider ionic steric effects, e.g., the APBS~\cite{BSJ+:PNASUSA:2001, APBS_Baker18}, DelPhi~\cite{DelPhiAlexov_CICP2013, LiJiaPengAlexov_BioinfM2017},  MIBPB~\cite{ZhouFeigWei_JCC2008, ChenGengWei_JCC2011}, and AFMPB~\cite{LuChengHuangMcCammon_CPC13}.

\section*{Acknowlegment}
We would like to thank the anonymous reviewers for their comments which have led to an improvement of  this paper.  M. Chen was supported by National Natural Science Foundation of China through Grant No. 11801513 and Fundamental Research Funds of Zhejiang Sci-Tech University through Grant No. 2021Q053. W. Dou and S. Zhou were partially supported by the National Natural Science Foundation of China 12171319 and Shanghai Science and Technology Commission (21JC1403700).
}
\bibliography{Charges,hydrophob}
\bibliographystyle{plain}
\end{document}